\def\Gr{\mathop{\rm Gr}}
\def\P{{\mathcal P}}
\def\M{{\mathcal M}}
\def\bnu{{\boldsymbol \nu}}
\def\tpi{{\tilde \pi}}
\def\sX{{\mathsf X}}
\def\sA{{\mathsf A}}
\def\sE{{\mathsf E}}
\theoremstyle{remark}
\newtheorem{assumption}{Assumption}
\title{Discrete-time average-cost mean-field games on Polish spaces}
\author[AUTHOR]{
\textbf{Naci Saldi$^{1}$\thanks{Correspondence: naci.saldi@ozyegin.edu.tr}} \\
$^{1}$Department of Natural and Mathematical Sciences, Faculty of Engineering, Özyeğin University, İstanbul, Turkey, \\
ORCID iD: https://orcid.org/0000-0002-2677-7366, corresponding author.
\\ [1.8em]

\rec{.201}
\acc{.201}
\finv{..201}
}
\newcommand{\bc}{\begin{center}}
\newcommand{\ec}{\end{center}}
\numberwithin{equation}{section}
\newtheorem{theorem}{Theorem}[section]
\newtheorem{definition}[theorem]{Definition}
\newtheorem{proposition}[theorem]{Proposition}
\begin{document}

\maketitle

\begin{abstract}
In stochastic dynamic games, when the number of players is sufficiently large and the interactions between agents depend on empirical state distribution, one way to approximate the original game is to introduce infinite-population limit of the problem. In the infinite population limit, a generic agent is faced with a \emph{so-called }mean-field game. In this paper, we study discrete-time mean-field games with average-cost criteria. Using average cost optimality equation and Kakutani's fixed point theorem, we establish the existence of Nash equilibria for mean-field games under drift and minorization conditions on the dynamics of each agent. Then, we show that the equilibrium policy in the mean-field game, when adopted by each agent, is an approximate Nash equilibrium for the corresponding finite-agent game with sufficiently many agents.  
\keywords{Mean-field games, average cost, approximate Nash equilibrium.}
\end{abstract}

\section{Introduction}\label{sec1}

In this paper, we consider discrete-time mean-field games subject to average-cost criteria with Polish state and action spaces. These games arise as the infinite population limit of finite-agent dynamic games, where agents interact through the empirical distribution of their states. The main goal in mean-field games is to establish the existence of an optimal policy and a state distribution that are consistent with each other. Then, this optimal policy, when adopted by each agent, forms an approximate Nash equilibrium for finite-agent games with sufficiently many agents.

Mean-field games have been introduced by Huang, Malham\'{e}, and Caines \cite{HuMaCa06} and Lasry and Lions \cite{LaLi07} to approximate continuous-time differential games with a large but finite number of identical agents interacting with each other via empirical distribution of their states (i.e., mean-field term). The key feature of this approach is to transform the game problem into a non-classical stochastic control problem by passing to the infinite-population limit. In the infinite population limit, since empirical state distribution converges to a deterministic probability measure by the law of large numbers, agents are decoupled from each other and each agent is faced with a stochastic control problem that has a constraint on the distribution of its state. The latter problem is called \emph{mean-field game} in the literature. The optimal solution of this stochastic  control problem provides an approximate Nash equilibrium when the number of agents is sufficiently large. In continuous-time differential games, this optimal solution is characterized by a Fokker-Planck equation evolving forward in time and a Hamilton-Jacobi-Bellman equation evolving backward in time. We refer the reader to
 \cite{HuCaMa07,TeZhBa14,Hua10,BeFrPh13,Ca11,CaDe13,GoSa14,MoBa16} for studies of continuous-time mean-field games with different models and cost functions, such as games with major-minor players, risk-sensitive games, games with Markov jump parameters, and LQG games.

Discrete-time mean-field games have not been studied much in the literature.
Existing works have mostly studied games with finite or countable state spaces  subject to finite-horizon or infinite-horizon discounted cost criteria. \cite{GoMoSo10} considers a discrete-time mean-field game with a finite state space over a finite horizon. In \cite{AdJoWe15}, discrete-time mean-field game with countable state-space is studied subject to an infinite-horizon discounted cost criterion. References \cite{ElLiNi13,MoBa15,NoNa13,MoBa16-cdc} consider discrete-time mean-field games with linear state dynamics. There are only three papers \cite{Bis15,Wie19,WiAl05} studying discrete-time mean-field games subject to average cost criteria. In \cite{WiAl05}, authors consider discrete set-up for average-cost mean-field games. In \cite{Bis15}, the author considers average-cost mean-field games with $\sigma$-compact Polish state spaces. In that paper, it was assumed that, for the finite agent game problem, the dynamics of agents do not depend on the empirical distribution of the states. Under strong regularity conditions on system components, \cite{Bis15} established the existence of  Nash equilibria for finite-agent games, and then, showed that these Nash equilibria converge to mean-field equilibria in the infinite-population limit. These imposed regularity conditions are in general prohibitive because they are stated in terms of a specific metric topology on the set of policies, and appear to be too strong to hold under reasonable assumptions. \cite{Wie19} considers average-cost mean-field games with compact state spaces. This setup is the closest to the one studied in this paper. However, in addition to the state space being compact, \cite{Wie19} also uses a completely different technique to establish the existence of equilibrium in the infinite-population limit. Namely, \cite{Wie19} proves the existence of mean-field equilibrium using ergodic properties of Markov chains induced by policies whereas we employ here the dynamic programming principle, stated through average cost equation, to establish the existence of mean-field equilibrium.

In \cite{SaBaRaSIAM} we studied infinite-horizon discounted-cost version of the same problem. Under mild assumptions on the system components, we first established the existence of mean-field equilibrium and then proved that the equilibrium in the mean-field game constitutes an approximate Nash equilibrium for finite-agent games when the number of agents is sufficiently large. However, in the average-cost set-up, it is infeasible to establish similar results under similar assumptions as analysis of average-cost criterion is much more difficult than discounted-cost criterion. Therefore, to establish similar existence and approximation results, we impose drift and minorization conditions on the system dynamics of each agent, which are a bit strong and but are quite common to study average-cost stochastic control problems. 

The paper is organized as follows. In Section~\ref{sec3}, we introduce the infinite-population mean-field game and define mean-field equilibrium. In Section~\ref{sec2}, we formulate the finite-agent game problem of the mean-field type. In Section~\ref{main-proof}, we prove the existence of a mean-field equilibrium. In Section~\ref{sec4-1} we establish that the mean-field equilibrium policies lead to an approximate Nash equilibrium for finite-agent games with sufficiently many agents. Section~\ref{conc} concludes the paper.

\noindent\textbf{Notation.} For a metric space $\sE$, we let $C_b(\sE)$ denote the set of all bounded continuous real functions on $\sE$ endowed with sup-norm $\|g\| = \sup_{e \in \sE} |g(e)|$, which turns $C_b(\sE)$ into a Banach space. Let $\P(\sE)$ denote the set of all Borel probability measures on $\sE$. A sequence $\{\mu_n\}$ of measures on $\sE$ is said to converge weakly to a measure $\mu$ if $\int_{\sE} g(e)\, \mu_n(de)\rightarrow\int_{\sE} g(e) \, \mu(de)$ for all $g \in C_b(\sE)$. We endow $\P(\sE)$ with weak topology induced by weak convergence of probability measures. This topology is known to be metrizable, and if $\sE$ is complete and separable, then $\P(\sE)$ is also complete and separable under weak topology. For metric spaces $\sE_1$ and $\sE_2$, a stochastic kernel $\gamma(\,\cdot\,|e_1)$ (or \emph{regular conditional probability measure}) on $\sE_2$ given $\sE_1$ is a measurable function $\gamma: \sE_1 \rightarrow \P(\sE_2)$. A probability measure $\mu$ is called an invariant probability measure of a stochastic kernel $\gamma$ on $\sE$ given $\sE$ if $\mu(\,\cdot\,) = \int_{\sE} \gamma(\,\cdot\,|e) \, \mu(de)$. For any subset $B$ of $\sE$, we let $\partial B$ and $B^c$ denote the boundary and complement of $B$, respectively. The notation $v\sim \nu$ means that the random element $v$ has distribution $\nu$. Unless otherwise specified, the term ``measurable" will refer to Borel measurability.

\section{Mean-field games and mean-field equilibria}\label{sec3}

The discrete-time mean-field game model is specified by $\bigl( \sX, \sA, p, c, \mu_0 \bigr)$, where $\sX$ and $\sA$ are the Polish (complete and separable metric space) state and action spaces, respectively. The stochastic kernel $p : \sX \times \sA \times \P(\sX) \to \P(\sX)$ denotes the transition probability law of the next state given the previous state-action pair and state-measure. The measurable function $c: \sX \times \sA \times \P(\sX) \rightarrow [0,\infty)$ is the one-stage cost function. The probability measure $\mu_0$ denotes the initial distribution of the state. A policy $\pi$ is a stochastic kernel on $\sA$ given $\sX$; that is, $\pi:\sX \rightarrow \P(\sA)$ is a measurable function, where $\P(\sA)$ is endowed with the Borel $\sigma$-algebra generated by the weak convergence of probability measures. Let $\Pi$ denote the set of all policies. According to the Ionescu Tulcea Theorem \cite{HeLa96}, an initial distribution $\mu_0$, a policy $\pi$, and a transition probability $p$ define a unique probability measure $P^{\pi}$ on $(\sX \times \sA)^{\infty}$. The expectation with respect to $P^{\pi}$ is denoted by $E^{\pi}$.

Given any state-measure $\mu \in \P(\sX)$, a policy $\pi^{*} \in \Pi$ is optimal for $\mu$ if
\begin{align}
J_{\mu}(\pi^{*}) = \inf_{\pi \in \Pi} J_{\mu}(\pi), \nonumber
\end{align}
where
\begin{align}
J_{\mu}(\pi) &= \limsup_{T \rightarrow \infty} \frac{1}{T} E^{\pi}\biggl[ \sum_{t=0}^{T-1} c(x(t),a(t),\mu) \biggr] \nonumber
\end{align}
is the average cost of policy $\pi$ with state-measure $\mu$. In this case, the evolution of the states and actions is given by
\begin{align}
x(0) &\sim \mu_0, \,\,\,
x(t) \sim p(\,\cdot\,|x(t-1),a(t-1),\mu), \text{ } t\geq1, \nonumber \\
a(t) &\sim \pi(\,\cdot\,|x(t)), \text{ } t\geq0. \nonumber
\end{align}
Define the set-valued mapping $\Psi : \P(\sX) \rightarrow 2^{\Pi}$  as $\Psi(\mu) = \{\pi \in \Pi: \pi \text{ is optimal for }  \mu \text{ }\text{ and } \text{ } \mu_0 = \mu\}$.

Conversely, we define another set-valued mapping $\Lambda : \Pi \to 2^{\P(\sX)}$ as follows: given $\pi \in \Pi$, the state-measure $\mu_{\pi}$ is in $\Lambda(\pi)$ if it is a fixed point of the following equation:
\begin{align}
\mu_{\pi}(\,\cdot\,) = \int_{\sX \times \sA} p(\,\cdot\,|x,a,\mu_{\pi})  \, \pi(da|x) \, \mu_{\pi}(dx). \nonumber
\end{align}
Under Assumption~\ref{as1}, which is given below, one can prove that $\Lambda(\pi)$ has a unique element for all $\pi$. Therefore, it is indeed a single-valued mapping.

The notion of an equilibrium for the average-cost mean-field game is defined via these mappings $\Psi$, $\Lambda$ as follows.

\begin{definition}
A pair $(\pi,\mu) \in \Pi \times \P(\sX)$ is a \emph{mean-field equilibrium} if $\pi \in \Psi(\mu)$ and $\mu \in \Lambda(\pi)$.
\end{definition}

The main goal in average-cost mean-field games is to establish the existence of a mean-field equilibrium. To that end, we impose the assumptions below on the components of the mean-field game model. Note that a function $w: \sX \rightarrow [0,\infty)$ is called a moment function if there exists a non-decreasing sequence of compact sets $K_n \uparrow \sX$ such that
$$
\lim_{n \rightarrow \infty} \inf_{x \notin K_n} w(x) = \infty. 
$$ 

\begin{assumption}\label{as1}
\begin{itemize}
\item [(a)] The cost function $c$ is bounded and continuous.
\item [(b)] The stochastic kernel $p$ is weakly continuous; that is, if $(x_n,a_n,\mu_n) \rightarrow (x,a,\mu)$, then $p(\,\cdot\,|x_n,a_n,\mu_n) \rightarrow p(\,\cdot\,|x,a,\mu)$ weakly.
\item [(c)] $\sA$ is compact.
\item [(d)] There exists a non-degenerate sub-probability measure $\lambda$ on $\sX$ such that
$$p(\,\cdot\,|x,a,\mu) \geq \lambda(\,\cdot\,)$$
for all $x \in \sX$, $a \in \sA$, and $\mu \in \P(\sX)$.
\item [(e)] There exist a constant $\alpha \in (0,1)$  and a continuous moment function $w: \sX \rightarrow [0,\infty)$ such that
\begin{align}
\sup_{(a,\mu) \in \sA \times \P(\sX)} \int_{\sX} w(y) \, p(dy|x,a,\mu) \leq \alpha \, w(x) + \int_{\sX} w(y) \, \lambda(dy). \label{eqqq1}
\end{align}
\end{itemize}
\end{assumption}

\noindent If we define the following sub-stochastic kernel $\hat{p}(\,\cdot\,|x,a,\mu) = p(\,\cdot\,|x,a,\mu) - \lambda(\,\cdot\,)$, then (\ref{eqqq1}) can be written as 
\begin{align}
\sup_{(a,\mu) \in \sA \times \P(\sX)} \int_{\sX} w(y) \, \hat{p}(dy|x,a,\mu) \leq \alpha \, w(x). \label{eqqq2}
\end{align}
Note that condition (e) is so-called `drift inequality' and condition (d) is so-called `minorization' condition, both of which were used in the literature for studying ergodicity of Markov chains (see \cite{HeLa99}, and references therein). These assumptions are quite general for studying average cost stochastic control problems. Indeed, Assumption~\ref{as1}-(d) is true when the transition probability satisfies conditions R1(a) and R1(b) in \cite{HeMoRo91} (see also \cite[Remark 3.3]{HeMoRo91} and references therein for further conditions). For Assumption~\ref{as1}-(e), we refer the reader to the examples in \cite[Section 7.4]{HeLa99} to see under which conditions on the system components Assumption~\ref{as1}-(e) holds.

The main result of this section is the existence of a mean-field equilibrium under Assumption~\ref{as1}.

\begin{theorem}\label{thm:MFE} Under Assumption~1, the mean-field game $\bigl( \sX, \sA, p, c \bigr)$ admits a mean-field equilibrium $(\pi^*,\mu^*)$.
\end{theorem}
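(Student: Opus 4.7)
The plan is to apply the Kakutani--Fan--Glicksberg fixed point theorem to a set-valued map built from $\Psi$ and $\Lambda$, on a compact convex subset of $\P(\sX)$ that the map leaves invariant. First I would isolate the right invariant set: set $M := \int_\sX w\,d\lambda/(1-\alpha)$ and
\[
\M := \Bigl\{\mu \in \P(\sX):\ \int_\sX w\,d\mu \le M\Bigr\}.
\]
Since $w$ is a continuous moment function, Markov's inequality and $K_n\uparrow\sX$ give tightness of $\M$, and lower semicontinuity of $\mu\mapsto\int w\,d\mu$ gives weak-closedness, so $\M$ is convex and weakly compact in $\P(\sX)$.

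Second, I would use the minorization condition (d) together with standard Harris-recurrence arguments to show that, for every $\pi\in\Pi$, the Markov kernel $p_{\pi,\mu}(\,\cdot\,|x) := \int_\sA p(\,\cdot\,|x,a,\mu)\pi(da|x)$ is uniformly geometrically ergodic in $(x,\mu,\pi)$. In particular, for any $\pi$ the ``consistency'' equation defining $\Lambda(\pi)$ has a unique solution $\mu_\pi$, so $\Lambda:\Pi\to\P(\sX)$ is single-valued as asserted; and integrating the drift inequality \eqref{eqqq2} against $\mu_\pi$ yields $\int w\,d\mu_\pi\le M$, so $\Lambda(\Pi)\subseteq\M$.

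Third, I would produce an optimal policy for every $\mu\in\M$ via the average-cost optimality equation (ACOE). The drift/minorization pair implies the span contraction/vanishing-discount conditions of \cite{HeLa99}, giving a constant $\rho_\mu$ and a bounded measurable relative value function $h_\mu$ satisfying
\[
\rho_\mu + h_\mu(x) = \min_{a\in\sA}\Bigl[\,c(x,a,\mu) + \int_\sX h_\mu(y)\,p(dy|x,a,\mu)\Bigr].
\]
Boundedness of $c$, weak continuity of $p$, compactness of $\sA$ and a measurable-selection argument yield a deterministic stationary $f_\mu\in\Psi(\mu)$; thus $\Psi(\mu)\ne\emptyset$.

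Finally, I would define $\Gamma:\M\to 2^{\M}$ by $\Gamma(\mu) := \{\Lambda(\pi):\ \pi\in\Psi(\mu)\}$ and verify the hypotheses of Kakutani--Fan--Glicksberg. Nonemptiness and inclusion in $\M$ come from the previous steps. For convexity of $\Gamma(\mu)$ I would reformulate the problem in terms of ergodic occupation measures $\nu(dx,da)=\mu_\pi(dx)\pi(da|x)$ satisfying the linear invariance constraint $\int \varphi(x)\nu(dx,da) = \int\int \varphi(y) p(dy|x,a,\mu)\nu(dx,da)$ for all $\varphi\in C_b(\sX)$, together with the linear optimality constraint coming from the ACOE; the feasible set is then convex, and projecting onto the $\sX$-marginal preserves convexity. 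The closed-graph property is where the real work lies: given $\mu_n\to\mu$ in $\M$, $\pi_n\in\Psi(\mu_n)$, $\Lambda(\pi_n)\to\nu$, I would use weak continuity of $p$ together with the uniform ergodicity bound to extract a subsequential limit $\pi$ of $\{\pi_n\}$ (in the Young/stable topology on stochastic kernels), show $\nu = \Lambda(\pi)$ by passing to the limit in the invariance equation, and show $\pi\in\Psi(\mu)$ by passing to the limit in the ACOE after proving that $\{h_{\mu_n}\}$ is equicontinuous/precompact in an appropriate $w$-weighted sup-norm. The main obstacle I expect is precisely this last step: establishing continuous dependence of the ACOE data $(\rho_\mu,h_\mu)$ on $\mu$, since the average-cost setting lacks the simple contraction available in the discounted case. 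Once this is in hand, Kakutani--Fan--Glicksberg yields $\mu^*\in\Gamma(\mu^*)$, and any $\pi^*\in\Psi(\mu^*)$ with $\Lambda(\pi^*)=\mu^*$ is a mean-field equilibrium.
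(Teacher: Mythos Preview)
Your overall strategy---Kakutani on a compact convex set built from the drift inequality, with optimality encoded through the ACOE---is the same as the paper's. The execution differs in one structural respect: the paper works directly on the space $\Xi\subset\P(\sX\times\sA)$ of state--action measures rather than on your $\M\subset\P(\sX)$. Its set-valued map sends $\nu$ to those $\nu'$ whose $\sX$-marginal equals the one-step push-forward $\int p(\cdot\,|x,a,\nu_1)\,\nu(dx,da)$ and whose support lies in the ACOE minimizers for $\nu_1$. This makes convexity immediate and lets the closed-graph argument run entirely in terms of weakly convergent measures, with no need to extract limits of policies in the Young topology. Your convexity paragraph in fact drifts toward this formulation; note, however, an inconsistency you should fix: your definition $\Gamma(\mu)=\{\Lambda(\pi):\pi\in\Psi(\mu)\}$ uses the \emph{nonlinear} $\Lambda$ (with $\mu_\pi$ inside $p$), whereas your occupation-measure reformulation uses the linear invariance constraint with the external $\mu$ inside $p$. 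Off the fixed point these sets differ; for the Kakutani argument you want the linearized version throughout (they coincide at a fixed point, so the conclusion is unaffected).

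The ``main obstacle'' you flag is where you are missing the paper's key simplification. Under the minorization $p\ge\lambda$ one can write $\hat p=p-\lambda$ and
\[
T_\mu u(x)=\min_{a\in\sA}\Bigl[c(x,a,\mu)+\int_\sX u(y)\,\hat p(dy|x,a,\mu)\Bigr],
\]
which \emph{is} a genuine sup-norm contraction on $C_b(\sX)$ with modulus $\beta=1-\lambda(\sX)<1$, exactly as in the discounted case. The relative value function $h_\mu$ is its unique fixed point, and continuous dependence of $h_\mu$ on $\mu$ (uniformly on compacts) follows from a routine three-term estimate using the finite iterates $T_\mu^k 0$ together with the uniform geometric bound $\|T_\mu^k 0-h_\mu\|\le L_0\beta^k$. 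This is precisely how the paper handles the closed-graph step, bypassing span seminorms, vanishing discount, and $w$-weighted equicontinuity altogether. So your plan works, but the part you expected to be hardest is in fact the cleanest once you exploit condition~(d) this way.
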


It is important to note that mean-field games are not games in the strict sense. They are stochastic control problems subject to a constraint on the distribution of the state at each time step. In other words, we have a single agent and represent the collective behavior of (a large population of) other agents by an exogenous \textit{state-measure} $\mu \in \P(\sX)$. This measure $\mu$ should also be consistent with the state distributions of this single agent when the agent acts optimality. The proof of Theorem~\ref{thm:MFE} is given Section~\ref{main-proof}. To establish the existence of a mean-field equilibrium, we use dynamic programming principle for average-cost criterion, which is stated via average cost optimality equation (ACOE), in addition to fixed point approach that is commonly used in classical game problems.

\section{Finite Player Game}\label{sec2}

The motivation for studying mean-field games comes from the challenges to establish the existence of Nash equilibria for large population stochastic dynamic games with mean-field interactions. More precisely, suppose that we have a discrete-time $N$-agent stochastic game with state space $\sX$ and action space $\sA$. For every $t \in \{0,1,2,\ldots\}$ and every $i \in \{1,2,\ldots,N\}$, let $x^N_i(t) \in \sX$ and $a^N_i(t) \in \sA$ denote the state and the action of Agent~$i$ at time $t$, and let
\begin{align}
e_t^{(N)}(\,\cdot\,) = \frac{1}{N} \sum_{i=1}^N \delta_{x_i^N(t)}(\,\cdot\,) \in \P(\sX) \nonumber
\end{align}
denote the empirical distribution of the states at time $t$ (i.e., \emph{mean-field term}), where $\delta_x\in\P(\sX)$ is the Dirac measure at $x$. The initial states $x^N_i(0)$ are independent and identically distributed according to $\mu_0$, and, for each $t \ge 0$, next states $(x^N_1(t+1),\ldots,x^N_N(t+1))$ are generated according to the probability distribution
\begin{align}
\prod^N_{i=1} p\big(dx^N_i(t+1)\big|x^N_i(t),a^N_i(t),e^{(N)}_t\big). \nonumber 
\end{align}
A \emph{policy} for a generic agent is a stochastic kernel $\pi$ on $\sA$ given $\sX$. The set of all policies for Agent~$i$ is denoted by $\Pi_i$. Let ${\bf \Pi}^{(N)} = \prod_{i=1}^N \Pi_i$. We let ${\boldsymbol \pi}^{(N)} = (\pi^1,\ldots,\pi^N)$, $\pi^i \in \Pi_i$, denote the $N$-tuple of policies for all the agents in the game. Under such $N$-tuple of policies, actions $(a^N_1(t),\ldots,a^N_N(t))$ at each time $t \ge 0$ are generated according to the probability distribution
\begin{align}
\prod^N_{i=1} \pi^i\big(da^N_i(t)\big|x^N_i(t)\big).\nonumber
\end{align}
Note that agents can only use their local states when constructing their control laws. For Agent~$i$, the average cost under the initial distribution $\mu_0$ and $N$-tuple of policies ${\boldsymbol \pi}^{(N)} \in {\bf \Pi}^{(N)}$ is given by
\begin{align}
J_i^{N}({\boldsymbol \pi}^{(N)}) &= \limsup_{T \rightarrow \infty} \frac{1}{T} E^{{\boldsymbol \pi}^{(N)}}\biggl[\sum_{t=0}^{T-1} c(x_{i}^N(t),a_{i}^N(t),e^{(N)}_t)\biggr]. \nonumber
\end{align}
We now define the notion of Nash equilibrium for this game problem as follows.

\begin{definition}
A $N$-tuple of policies ${\boldsymbol \pi}^{(N*)}= (\pi^{1*},\ldots,\pi^{N*})$ constitutes a \emph{Nash equilibrium} if
\begin{align}
J_i^{(N)}({\boldsymbol \pi}^{(N*)}) = \inf_{\pi^i \in \Pi_i} J_i^{(N)}({\boldsymbol \pi}^{(N*)}_{-i},\pi^i) \nonumber
\end{align}
for each $i=1,\ldots,N$, where ${\boldsymbol \pi}^{(N*)}_{-i} = (\pi^{j*})_{j\neq i}$.
\end{definition}

It is known that it is in general prohibitive to establish the existence of Nash equilibrium under decentralized information structure. Moreover, when the number of agents is large, obtaining Nash equilibrium is computationally intractable. Therefore, it is of interest to prove the existence of approximate Nash equilibrium. To that end, we introduce the following solution concept:
\begin{definition}\label{def1}
A $N$-tuple of policies ${\boldsymbol \pi}^{(N*)} \in {\bf \Pi}^{(N)}$ is an \emph{$\varepsilon$-Nash equilibrium} (for a given $\varepsilon > 0$) if
\begin{align*}
J_i^{(N)}({\boldsymbol \pi}^{(N*)}) &\leq \inf_{\pi^i \in \Pi_i} J_i^{(N)}({\boldsymbol \pi}^{(N*)}_{-i},\pi^i) + \varepsilon
\end{align*}
for each $i=1,\ldots,N$.
\end{definition}  
In mean field games, the aim is now to show that the policy $\pi^*$ in the mean-field equilibrium, when adopted by each agent,  is $\varepsilon$-Nash equilibrium for games with sufficiently many agents. To that end, we need to impose additional assumptions on the components of the game model. 

Let $\rho$ denote the following metric on $\P(\sX)$ that metrizes the weak topology:
$$
\rho(\mu,\nu) = \sum_{m=0}^{\infty} 2^{-m} \, \left|\, \int_{\sX} f(x) \, \mu(dx)-\int_{\sX} f(x) \, \nu(dx) \, \right|, 
$$
where $f_m \in C_b(\sX)$ and $\|f_m\| \leq 1$ for all $m$ (see \cite[Theorem 6.6, p. 47]{Par67}). Define the following moduli of continuity:
\begin{align}
\omega_{c}(r) &= \sup_{(x,a) \in \sX\times\sA} \sup_{\substack{\mu,\nu: \\ \rho(\mu,\nu)\leq r}} |c(x,a,\mu) - c(x,a,\nu)|. \nonumber
\end{align}
In addition to Assumption~\ref{as1}, we impose an additional assumption, which is stated below. 

\begin{assumption}\label{as2}
\begin{itemize}
\item [(a)] The transition probability $p(\,\cdot\,|x,a)$ does not depend on $\mu$.
\item [(b)] $\omega_c(r) \rightarrow 0$ as $r\rightarrow0$.
\end{itemize}
\end{assumption}

\noindent For average cost criterion, Assumption~\ref{as2}-(a) is quite common to establish the existence of approximate Nash equilibrium (see \cite{Bis15,Wie19}). 
The following theorem is the main result of this section, which states that the policy ${\boldsymbol \pi}^{(N*)} = (\pi^*,\ldots,\pi^*)$, where $\pi^*$ is repeated $N$ times, which itself is obtained from the mean-field equilibrium, is an $\varepsilon$-Nash equilibrium for sufficiently large $N$.

\begin{theorem}\label{appr-thm}
For any $\varepsilon>0$, there exists $N(\varepsilon)$ such that for $N\geq N(\varepsilon)$, the $N$-tuple of policies ${\boldsymbol \pi}^{(N*)}$ is an $\varepsilon$-Nash equilibrium for the game with $N$ agents.
\end{theorem}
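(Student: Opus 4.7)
\medskip
\noindent\textbf{Proof plan for Theorem~\ref{appr-thm}.}
The plan is to compare each quantity in the $\varepsilon$-Nash inequality to the mean-field quantity $J_{\mu^*}(\pi^*)$, where $(\pi^*,\mu^*)$ is the equilibrium produced by Theorem~\ref{thm:MFE}. Because Assumption~\ref{as2}(a) makes the transition law $\mu$-independent, the dependence on the empirical measure $e_t^{(N)}$ enters only through the cost, and one can use Assumption~\ref{as2}(b) to replace $c(x,a,e_t^{(N)})$ by $c(x,a,\mu^*)$ at the price of an additive error $\omega_c(\rho(e_t^{(N)},\mu^*))$. Specifically, I would prove two inequalities for any $\pi^i\in\Pi_i$:
\begin{align*}
J_i^{(N)}({\boldsymbol \pi}^{(N*)}) &\le J_{\mu^*}(\pi^*)+\delta_1(N), \\
J_i^{(N)}({\boldsymbol \pi}^{(N*)}_{-i},\pi^i) &\ge J_{\mu^*}(\pi^i)-\delta_2(N) \ \ge\ J_{\mu^*}(\pi^*)-\delta_2(N),
\end{align*}
with $\delta_1(N),\delta_2(N)\to 0$ as $N\to\infty$. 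The second inequality uses the mean-field optimality of $\pi^*$ from $\pi^*\in\Psi(\mu^*)$; taking $N$ so large that $\delta_1(N)+\delta_2(N)<\varepsilon$ finishes the proof.

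The technical heart is showing that $\limsup_T \tfrac{1}{T}\sum_{t=0}^{T-1} E[\omega_c(\rho(e_t^{(N)},\mu^*))]\to 0$ uniformly over the choice of $\pi^i$ (the deviator). Under Assumption~\ref{as2}(a), the agents $j\ne i$ evolve as independent copies of the Markov chain with transition kernel $q^*(\cdot|x)=\int_{\sA} p(\cdot|x,a)\,\pi^*(da|x)$; the drift inequality \eqref{eqqq1} together with the minorization condition (Assumption~\ref{as1}(d)) yields geometric ergodicity of $q^*$ with invariant measure $\mu^*$ and a uniform rate, so the one-dimensional marginals $\nu_t^*=\mathrm{Law}(x_j(t))$ converge to $\mu^*$ geometrically. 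The deviator contributes only the term $\tfrac{1}{N}\delta_{x_i(t)}$, which is $O(1/N)$ in $\rho$ uniformly in $t$ because $f_m$ in the definition of $\rho$ satisfy $\|f_m\|\le 1$; thus
$$
\rho\bigl(e_t^{(N)},\mu^*\bigr)\ \le\ \tfrac{2}{N} + \rho\Bigl(\tfrac{1}{N-1}\sum_{j\ne i}\delta_{x_j(t)},\nu_t^*\Bigr) + \rho(\nu_t^*,\mu^*).
$$
The first term is $O(1/N)$; the third vanishes geometrically in $t$; the middle term is handled by the LLN for empirical measures of independent samples from $\nu_t^*$, together with the tightness provided by the moment function $w$ (Assumption~\ref{as1}(e)). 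Taking time averages and splitting $t<T_0$ vs.\ $t\ge T_0$ reduces both remaining pieces to something that goes to zero in $N$ for any fixed $T_0$, after which the geometric decay handles the tail.

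Once this empirical-measure convergence is in hand, the two key bounds reduce to the observation that, because the cost is bounded and $\omega_c(r)\to 0$ (Assumption~\ref{as2}(b)),
$$
\Bigl|\tfrac{1}{T}\sum_{t=0}^{T-1}E^{{\boldsymbol \pi}^{(N)}}[c(x_i(t),a_i(t),e_t^{(N)})-c(x_i(t),a_i(t),\mu^*)]\Bigr|
\ \le\ \tfrac{1}{T}\sum_{t=0}^{T-1}E[\omega_c(\rho(e_t^{(N)},\mu^*))],
$$
which is $o(1)$ in $N$ uniformly in $T$ by the previous step; taking $\limsup_{T\to\infty}$ therefore converts $J_i^{(N)}$ into $J_{\mu^*}(\cdot)$ up to a vanishing error. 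To identify the resulting decoupled average cost $\limsup_T \tfrac{1}{T}\sum E[c(x_i(t),a_i(t),\mu^*)]$ with $J_{\mu^*}(\pi^i)$ when agent $i$ deviates, I would appeal again to the uniform ergodicity afforded by Assumption~\ref{as1}(d,e), valid for every policy, ensuring that the average cost is a well-defined limit independent of the initial distribution.

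The main obstacle will be the uniform-in-$T$ and uniform-in-deviator control of $E[\omega_c(\rho(e_t^{(N)},\mu^*))]$. Deviator $i$'s own marginal $\mathrm{Law}(x_i(t))$ depends on $\pi^i$ and need not converge to $\mu^*$, but only its $1/N$ weighted contribution enters $e_t^{(N)}$, so this is a genuine but manageable difficulty; bounding it rigorously requires the drift condition \eqref{eqqq1} to control $E[w(x_i(t))]$ uniformly in $t$ and $\pi^i$, thereby preserving tightness of $e_t^{(N)}$ and allowing passage from weak convergence of the $j\ne i$ empirical measure to convergence under $\rho$.
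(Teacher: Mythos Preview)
Your overall strategy---sandwich $J_i^{(N)}$ between $J_{\mu^*}(\pi^*)\pm\delta(N)$ by using $\omega_c$ to swap $e_t^{(N)}$ for $\mu^*$, and then invoke the optimality of $\pi^*$---matches the paper's, and your argument can be made rigorous. However, the paper reaches the same conclusion by a considerably shorter route that you touch on only in your final paragraph. Under Assumption~\ref{as1}(d,e) together with Assumption~\ref{as2}(a), for \emph{every} policy tuple the $N$-agent chain on $\sX^N$ has a unique invariant probability measure and the average cost equals the integral of $C^N$ against it; moreover, since $p$ is $\mu$-independent, that invariant measure factorizes as $\mu_{\pi^1}\otimes\cdots\otimes\mu_{\pi^N}$. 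The paper therefore writes, for any deviator policy $\pi$,
\[
J_1^N(\pi,\pi^*,\ldots,\pi^*)=\int_{\sX\times\sA}\Bigl[\int_{\sX^{N-1}}c\bigl(x_1,a_1,\tfrac{1}{N}\textstyle\sum_{i}\delta_{x_i}\bigr)\,\mu^*(dx_2)\cdots\mu^*(dx_N)\Bigr]\pi(da_1|x_1)\,\mu_\pi(dx_1),
\]
and compares this directly with $J_{\mu^*}(\pi)=\int c(x_1,a_1,\mu^*)\,\pi(da_1|x_1)\,\mu_\pi(dx_1)$. The difference is bounded by a single quantity $\epsilon(N)$, essentially $\int\omega_c\bigl(\rho(\mu^*,\tfrac{1}{N-1}\sum_{i\ge2}\delta_{x_i})\bigr)\,d\mu^{*\otimes(N-1)}+\omega_c(4/N)$, which is \emph{independent of $\pi$} and tends to zero by the i.i.d.\ law of large numbers. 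This one estimate plays the role of both your $\delta_1(N)$ and $\delta_2(N)$ simultaneously. The payoff is that all of your time-trajectory machinery---the geometric ergodicity of $q^*$, the $t<T_0$ versus $t\ge T_0$ split, the uniform-in-$t$ empirical LLN, and the uniform-in-deviator tightness argument---becomes unnecessary: once you pass to invariant measures, the comparison is static, the samples $x_2,\ldots,x_N$ are genuinely i.i.d.\ from $\mu^*$, and uniformity in the deviator is automatic because $\mu^*$ does not depend on $\pi$. Your path works, but it does by hand what the ergodic representation of the average cost gives for free.
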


The proof of this theorem is given in Section~\ref{sec4-1}.

\section{Proof of Theorem~\ref{thm:MFE}}\label{main-proof}

Under Assumption~\ref{as1}, for any $(\pi,\mu)$, there exists a unique probability measure $\mu_{\pi,\mu}$ such that
\begin{align}
\mu_{\pi,\mu}(\,\cdot\,) = \int_{\sX \times \sA} p(\,\cdot\,|x,a,\mu) \, \pi(da|x) \, \mu_{\pi,\mu}(dx) \label{invariant}
\end{align}
(see \cite[Theorem 3.3]{Veg03}, \cite[Lemma 3.4]{GoHe95}). Given $\mu \in \P(\sX)$, we define the operator $T_{\mu}: C_b(\sX) \rightarrow C_b(\sX)$ as 
\begin{align}
T_{\mu}\, u(x) &= \min_{a \in \sA} \biggl[ c(x,a,\mu) + \int_{\sX} u(y) \, p(dy|x,a,\mu) - \int_{\sX} u(y) \, \lambda(dy)\biggr], \nonumber \\
&= \min_{a \in \sA} \biggl[ c(x,a,\mu) + \int_{\sX} u(y) \, \hat{p}(dy|x,a,\mu)\biggr]. \nonumber
\end{align}
Under Assumption~\ref{as1}, $T_{\mu}$ is a well-defined operator. One can also prove that $T_{\mu}$ is a contraction operator with modulus $\beta = 1 - \lambda(\sX) \in (0,1)$ \cite[Theorem 3.21]{SaLiYuBook}. Therefore, for each $\mu$, there exists a fixed point $h_{\mu} \in C_b(\sX)$ of $T_{\mu}$ by Banach Fixed Point Theorem. The following theorem characterizes the optimal policies for each state-measure $\mu$ using the operator $T_{\mu}$. 

\begin{theorem}\label{opt-pol}
Given $\mu \in \P(\sX)$, a policy $\pi$ is optimal for $\mu$ when $x(0) \sim \mu_{\pi,\mu}$, if 
\begin{align}
\nu_{\pi,\mu}\biggl( \biggl\{ (x,a): c(x,a,\mu) + \int_{\sX} h_{\mu}(dy) \, \hat{p}(dy|x,a,\mu) = T_{\mu} \, h_{\mu} \biggr\} \biggr) = 1, \nonumber
\end{align}
where $\nu_{\pi,\mu}(dx,da) = \mu_{\pi,\mu}(dx) \, \pi(da|x)$ and $h_{\mu}$ is the fixed point of $T_{\mu}$.
\end{theorem}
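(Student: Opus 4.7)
The plan is to invoke the standard average-cost optimality equation (ACOE) verification argument, once one recasts the fixed-point identity $T_\mu h_\mu = h_\mu$ in its usual ACOE form. First, I would rewrite the identity as
\begin{align*}
h_\mu(x) + \rho_\mu \;=\; \min_{a \in \sA}\biggl[\, c(x,a,\mu) + \int_{\sX} h_\mu(y)\, p(dy\,|\,x,a,\mu) \,\biggr],
\end{align*}
where the constant $\rho_\mu := \int_{\sX} h_\mu(y)\,\lambda(dy)$ arises by moving the $\lambda$-integral in $\hat p = p-\lambda$ to the left. This exhibits $(h_\mu,\rho_\mu)$ as a genuine solution of the ACOE for the control problem obtained by freezing the state-measure at $\mu$, and reduces the statement to the standard verification question.

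Next, I would establish the lower bound $J_\mu(\pi') \geq \rho_\mu$ for \emph{every} $\pi' \in \Pi$, regardless of initial law. From the ACOE, $\rho_\mu + h_\mu(x) \leq c(x,a,\mu) + \int h_\mu(y)\, p(dy|x,a,\mu)$ for all $(x,a)$. Taking $E^{\pi'}$ along the trajectory, summing from $t=0$ to $T-1$, and using the tower property yields
\begin{align*}
T\rho_\mu \;\leq\; E^{\pi'}\biggl[\sum_{t=0}^{T-1} c(x(t),a(t),\mu)\biggr] + E^{\pi'}\bigl[\, h_\mu(x(T)) - h_\mu(x(0))\,\bigr].
\end{align*}
Since $h_\mu \in C_b(\sX)$, the boundary term is $O(1)$; dividing by $T$ and sending $T \to \infty$ in the limsup gives $\rho_\mu \leq J_\mu(\pi')$.

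For the matching upper bound under the hypothesis, I would use invariance of $\mu_{\pi,\mu}$ under the closed-loop kernel $x \mapsto \int \pi(da|x)\, p(\,\cdot\,|x,a,\mu)$. Starting from $x(0) \sim \mu_{\pi,\mu}$, this forces $x(t) \sim \mu_{\pi,\mu}$ and hence the joint law of $(x(t),a(t))$ equals $\nu_{\pi,\mu}$ for every $t\geq 0$. The hypothesis then gives, $P^\pi$-almost surely at each $t$,
\begin{align*}
c(x(t),a(t),\mu) + \int_{\sX} h_\mu(y)\,\hat p(dy\,|\,x(t),a(t),\mu) \;=\; h_\mu(x(t)).
\end{align*}
Taking expectations and exploiting $E^\pi[h_\mu(x(t))] = E^\pi[h_\mu(x(t+1))]$ collapses the telescoping terms \emph{exactly} and yields $E^\pi[c(x(t),a(t),\mu)] = \rho_\mu$ for every $t$, so the Cesàro average is $\rho_\mu$. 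Combined with the lower bound, this gives optimality.

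I do not anticipate a substantive obstacle here, as the argument is the classical ACOE verification theorem. The only minor subtlety worth being careful about is the choice of initial distribution: it is precisely the invariance of $\mu_{\pi,\mu}$ that makes the telescoping boundary term vanish \emph{exactly} at every finite $T$ (rather than merely being bounded by $2\|h_\mu\|/T$), which is what is needed to convert the ``$\leq \rho_\mu$'' lower bound into the desired equality $J_\mu(\pi) = \rho_\mu$.
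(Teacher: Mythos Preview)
Your proposal is correct and follows essentially the same ACOE verification route as the paper: both recast $T_\mu h_\mu = h_\mu$ as the ACOE with constant $\rho_\mu = \int h_\mu\,d\lambda$, establish $\rho_\mu = \inf_{\pi'} J_\mu(\pi')$ via the boundedness of $h_\mu$, and then use the invariance of $\mu_{\pi,\mu}$ together with the hypothesis to show $J_\mu(\pi) = \rho_\mu$. The only cosmetic difference is that the paper packages the last step as an induction establishing $J_{\mu,n}(\pi,h_\mu,\mu_{\pi,\mu}) = n\rho_\mu + \int_{\sX} h_\mu\,d\mu_{\pi,\mu}$ for every $n$, whereas you compute the per-step expected cost directly from stationarity; these are two presentations of the same telescoping computation.
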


\begin{proof}
Note that if $T_{\mu} \, h_{\mu} = h_{\mu}$, then we have
$$
h_{\mu}(x) + \rho_{\mu} = \min_{a \in \sA} \biggl[ c(x,a,\mu) + \int_{\sX} h_{\mu}(dy) \, p(dy|x,a,\mu) \biggr], 
$$
where $\rho_{\mu} = \int_{\sX} h_{\mu}(dx) \, \lambda(dx)$. The last equation is called average cost optimality equation (ACOE) in the literature \cite{HeLa96}. Since 
$$
\lim_{n \rightarrow \infty} \frac{E^{\pi}[h_{\mu}(x(n))]}{n} = 0,
$$
for all $\pi$, as $h$ is bounded, we have \cite[Theorem 5.2.4]{HeLa96}
$$
\rho_{\mu} = \inf_{\pi \in \Pi} J_{\mu}(\pi) =: J_{\mu}^*. 
$$
Suppose that $\pi$ satisfies the hypothesis in the theorem. For each $n \geq 1$ and $\gamma \in \P(\sX)$, we define
$$
J_{\mu,n}(\pi,h_{\mu},\gamma) =  E^{\pi} \biggl[ \sum_{t=0}^{n-1} c(x(t),a(t),\mu) + h_{\mu}(x(n)) \, \bigg|\, x(0) \sim \gamma \biggr].
$$
We claim that, for each $n \geq 1$,  
$$
J_{\mu,n}(\pi,h_{\mu},\mu_{\pi,\mu}) = n \rho_{\mu} + \int_{\sX} h_{\mu}(dx) \, \mu_{\pi,\mu}(dx). \nonumber
$$
Claim clearly holds for $n=0$. Suppose it holds for $n$ and consider $n+1$: 
\begin{align}
J_{\mu,n+1}(\pi,h_{\mu},\mu_{\pi,\mu}) &= \int_{\sX\times\sA} \biggl[ c(x,a,\mu) + \int_{\sX} J_{\mu,n}(\pi,h_{\mu},\delta_y) \, p(dy|x,a,\mu) \biggr] \, \pi(da|x) \, \mu_{\pi,\mu}(dx) \nonumber \\
&\overset{(a)}{=} \int_{\sX\times\sA} c(x,a,\mu) \, \pi(da|x) \, \mu_{\pi,\mu}(dx) + \int_{\sX} J_{\mu,n}(\pi,h_{\mu},\delta_y) \, \mu_{\pi,\mu}(dy) \nonumber \\
&\overset{(b)}{=} \int_{\sX\times\sA} \biggl[ c(x,a,\mu) + \int_{\sX} \bigl[n\rho_{\mu} + h_{\mu}(dy)\bigr] \, p(dy|x,a,\mu) \biggr] \, \pi(da|x) \, \mu_{\pi,\mu}(dx) \nonumber \\
&= n\rho_{\mu} + \int_{\sX\times\sA} \biggl[ c(x,a,\mu) + \int_{\sX} h_{\mu}(dy) \, p(dy|x,a,\mu) \biggr] \, \nu_{\pi,\mu}(dx,da) \nonumber \\
&\overset{(c)}{=} n \rho_{\mu} + \int_{\sX\times\sA} \biggl[T_{\mu} h_{\mu}(x) + \rho_{\mu} \biggr] \, \nu_{\pi,\mu}(dx,da)  \nonumber \\
&\overset{(d)}{=} (n+1)\rho_{\mu} + \int_{\sX\times\sA} h_{\mu}(x) \, \mu_{\pi,\mu}(dx), \nonumber 
\end{align}
where (a) follows from (\ref{invariant}), (b) follows from
\begin{align}
J_{\mu,n}(\pi,h_{\mu},\mu_{\pi,\mu})  &= \int_{\sX} J_{\mu,n}(\pi,h_{\mu},\delta_x) \, \mu_{\pi,\mu}(dx) \nonumber
\end{align}
and induction hypothesis, (c) follows from the hypothesis in the theorem, and (d) follows from $T_{\mu} \, h_{\mu} = h_{\mu}$. This completes the proof of the claim. Note that 
\begin{align}
J_{\mu}(\pi) &= \limsup_{n\rightarrow\infty} \frac{1}{n} J_{\mu,n}(\pi,0,\mu_{\pi,\mu}) \nonumber \\
&= \limsup_{n\rightarrow\infty} \frac{1}{n} \biggl[ J_{\mu,n}(\pi,h_{\mu},\mu_{\pi,\mu}) - E^{\pi}[h_{\mu}(x(n))] \biggr] \nonumber \\
&\overset{(a)}{=} \limsup_{n\rightarrow\infty} \frac{1}{n} J_{\mu,n}(\pi,h_{\mu},\mu_{\pi,\mu}) \nonumber \\ 
&= \limsup_{n\rightarrow\infty} \frac{1}{n} \biggl[ n\rho_{\mu} + \int_{\sX} h_{\mu}(dx) \, \mu_{\pi,\mu}(dx) \biggr] \nonumber \\
&= \rho_{\mu} = J_{\mu}^*, \nonumber
\end{align}
where (a) follows from the fact that $h_{\mu}$ is bounded. Therefore, $\pi$ is optimal. 
\end{proof}

Define the set $\P_c(\sX)$ as 
$$
\P_c(\sX) = \left\{\mu \in \P(\sX): \int_{\sX} w(x) \, \mu(dx) \leq \frac{\int_{\sX} w(x) \, \lambda(dx)}{1-\alpha}\right\}.
$$
Since $w$ is a continuous moment function, $\P_c(\sX)$ is compact \cite[Proposition E.8]{HeLa96}. 
We also define 
\begin{align}
\Xi = \left\{\nu \in \P(\sX\times\sA): \nu(dx,da) = \mu(dx) \, \pi(da|x) \text{ } \text{for some} \text{  } (\pi,\mu) \in \Pi\times\P_c(\sX) \right\}. \nonumber
\end{align}
Note that compactness of $\sA$ and compactness of $\P_c(\sX)$ imply that $\Xi$ is tight
\cite[Definition E.5]{HeLa96}, and so, $\Xi$ is relatively compact \cite[Theorem E.6]{HeLa96}. Since $w$ is continuous, $\Xi$ is closed. Therefore, $\Xi$ is compact. For any $\nu \in \P(\sX \times \sA)$, we let $\nu_1$ denote the marginal distribution on $\sX$.

Note that $(\pi,\mu)$ is a mean-field equilibrium if $\pi \in \Psi(\Lambda(\pi))$. We transform this fixed point equation $\pi \in \Psi(\Lambda(\pi))$  into a fixed point equation of a set-valued mapping from $\Xi$ to $2^{\P(\sX\times\sA)}$. To that end, we define the set-valued mapping $\Gamma: \Xi \rightarrow 2^{\P(\sX\times\sA)}$ as follows:
\begin{align}
\Gamma(\nu) = C(\nu) \cap B(\nu), \nonumber
\end{align}
where
\begin{align}
C(\nu) &= \biggl\{ \nu': \nu'_{1}(\,\cdot\,) = \int_{\sX \times \sA} p(\,\cdot\,|x,a,\nu_{1}) \, \nu(dx,da) \biggr\} \nonumber \\
\intertext{and}
B(\nu) &= \biggl\{ \nu': \nu' \biggl( \biggr\{ (x,a) : c(x,a,\nu_1) + \int_{\sX} h_{\nu_1}(y) \, \hat{p}(dy|x,a,\nu_1) = T_{\nu_1} h_{\nu_1}(x) \biggr\} \biggr) = 1 \biggr\}. \nonumber
\end{align}
Note that the set $C(\nu)$ characterizes the equation $\mu = \Lambda(\pi)$ and the set $B(\nu)$ characterizes the equation $\pi \in \Psi(\mu)$. Under Assumption~\ref{as1}-(e), one can easily prove that the image of $\Xi$ under $\Gamma$ is contained in $2^{\Xi}$; that is, $\Gamma(\bnu) \subset \Xi$. 
Indeed, fix any $\nu \in \Xi$. It is sufficient to prove that $C(\nu) \subset \Xi$. Let $\nu' \in C(\nu)$. We have
\begin{align}
\int_{\sX} w(y) \, \nu'_{1}(dy) &= \int_{\sX \times \sA} \int_{\sX} w(y) \, p(dy|x,a,\nu_{1}) \, \nu(dx,da) \nonumber \\
&\leq \int_{\sX} \alpha w(x) \, \nu_{1}(dx) + \int_{\sX} w(y) \, \lambda(dy) \nonumber \text{ }(\text{by Assumption~1-(e)}) \\
&\leq \frac{\int_{\sX} w(x) \, \lambda(dx)}{1-\alpha} \text{ }(\text{as $\nu_{1} \in \P_c(\sX)$}). \nonumber
\end{align}
Hence, $\nu'_{1} \in \P_c(\sX)$. This completes the proof of the claim. Therefore,
$$
\Gamma: \Xi \rightarrow 2^{\Xi}.
$$
A point $\nu \in \Xi$ is a fixed point of $\Gamma$ if $\nu \in \Gamma(\nu)$. The following proposition makes the connection between mean-field equilibria and the fixed points of $\Gamma$.

\begin{proposition}\label{prop1}
Suppose that $\Gamma$ has a fixed point $\nu$. Disintegrate $\nu$ as $\nu(dx,da) = \nu_1(dx) \, \pi(da|x)$. Then the pair $(\pi,\nu_1)$ is a mean-field equilibrium.
\end{proposition}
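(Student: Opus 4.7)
The plan is to unpack the two conditions $\nu \in C(\nu)$ and $\nu \in B(\nu)$ separately and show that, after the disintegration $\nu(dx,da) = \nu_1(dx)\,\pi(da|x)$, the first gives $\nu_1 \in \Lambda(\pi)$ and the second gives $\pi \in \Psi(\nu_1)$ via Theorem~\ref{opt-pol}. No new analytic machinery should be needed; this is essentially a bookkeeping step verifying that the definitions of $C$ and $B$ were engineered precisely to encode the two halves of the mean-field equilibrium condition.

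First I would substitute the disintegration into the defining equation of $C(\nu)$. Since $\nu \in C(\nu)$,
\begin{align}
\nu_1(\,\cdot\,) = \int_{\sX \times \sA} p(\,\cdot\,|x,a,\nu_1)\,\nu(dx,da) = \int_{\sX \times \sA} p(\,\cdot\,|x,a,\nu_1)\,\pi(da|x)\,\nu_1(dx). \nonumber
\end{align}
This is exactly the fixed-point equation characterizing $\Lambda(\pi)$ evaluated at $\nu_1$, so $\nu_1 \in \Lambda(\pi)$. Moreover, by the uniqueness of the invariant measure stated just before display~(\ref{invariant}), $\nu_1$ coincides with the unique invariant measure $\mu_{\pi,\nu_1}$, so $\nu(dx,da) = \mu_{\pi,\nu_1}(dx)\,\pi(da|x) = \nu_{\pi,\nu_1}(dx,da)$ in the notation of Theorem~\ref{opt-pol}.

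Next I would use the condition $\nu \in B(\nu)$ together with the identification $\nu = \nu_{\pi,\nu_1}$ just obtained. By definition of $B(\nu)$,
\begin{align}
\nu_{\pi,\nu_1}\biggl(\biggl\{(x,a): c(x,a,\nu_1) + \int_{\sX} h_{\nu_1}(y)\,\hat{p}(dy|x,a,\nu_1) = T_{\nu_1}\, h_{\nu_1}(x) \biggr\}\biggr) = 1. \nonumber
\end{align}
This is precisely the hypothesis of Theorem~\ref{opt-pol} applied with $\mu = \nu_1$. That theorem therefore yields that $\pi$ is optimal for $\nu_1$ when $x(0) \sim \mu_{\pi,\nu_1} = \nu_1$, i.e.\ when the initial distribution equals the state-measure $\nu_1$. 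Consequently $\pi \in \Psi(\nu_1)$.

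Combining the two conclusions, $(\pi,\nu_1)$ satisfies both $\pi \in \Psi(\nu_1)$ and $\nu_1 \in \Lambda(\pi)$, which is the definition of a mean-field equilibrium. The only subtlety worth being careful about is that $\Psi(\mu)$ is defined with the implicit initial condition $\mu_0 = \mu$, so invoking Theorem~\ref{opt-pol} requires first knowing $\mu_{\pi,\nu_1} = \nu_1$; this is why the $C(\nu)$ condition has to be exploited before the $B(\nu)$ condition. There is no genuine obstacle beyond this ordering.
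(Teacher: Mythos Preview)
Your proof is correct and follows essentially the same approach as the paper: first use $\nu \in C(\nu)$ to identify $\nu_1 = \mu_{\pi,\nu_1} \in \Lambda(\pi)$, then use $\nu \in B(\nu)$ together with Theorem~\ref{opt-pol} to conclude $\pi \in \Psi(\nu_1)$. Your write-up is in fact more explicit than the paper's, particularly in flagging the ordering issue that the initial-condition requirement $\mu_0 = \nu_1$ in the definition of $\Psi$ forces one to establish $\mu_{\pi,\nu_1} = \nu_1$ before invoking Theorem~\ref{opt-pol}.
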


\begin{proof}
Let $\nu \in \Gamma(\nu)$. Since $\nu \in C(\nu)$, we have $\Lambda(\pi) = \nu_1 = \mu_{\pi,\nu_1}$. Moreover, since $\nu \in B(\nu)$, $\pi$ is optimal for $\nu_1$ when $x(0) \sim \nu_1$ by Theorem~\ref{opt-pol}; that is, $\pi \in \Psi(\nu_1)$. This completes the proof.
\end{proof}

\noindent By Proposition~\ref{prop1}, it suffices to prove that $\Gamma$ has a fixed point in order to establish the existence of a mean-field equilibrium. To prove this, we use Kakutani's fixed point theorem (\cite[Corollary 17.55]{AlBo06}). Note that $\Xi$ is convex. 
Furthermore, it can be proved that $C(\nu) \cap B(\nu) \neq \emptyset$ for any $\nu \in \Xi$. Indeed, we define
$$
\mu(\,\cdot\,) = \int_{\sX \times \sA} p(\,\cdot\,|x,a,\nu_{1}) \, \nu(dx,da).
$$
Moreover, let $f: \sX \rightarrow \sA$ be the minimizer of the following optimality equation:
\begin{align}
&c(x,f(x),\nu_1) + \int_{\sX} h_{\nu_1}(y) \, \hat{p}(dy|x,f(x),\nu_1) \nonumber \\
&\phantom{xxxxxxxxxxxxxxxxxxxx}= \inf_{a \in \sA} \biggl[ c(x,a,\nu_1) + \int_{\sX} h_{\nu_1}(y) \, \hat{p}(dy|x,a,\nu_1) \biggr]. \nonumber
\end{align}
Existence of such an $f$ follows from the Measurable Selection Theorem \cite[Section D]{HeLa96}. If we define $\nu'(dx,da) = \mu(dx) \, \delta_{f(x)}(da)$, then it is straightforward to prove that $\nu' \in C(\nu) \cap B(\nu)$, and so, $C(\nu) \cap B(\nu) \neq \emptyset$. We can also show that both $C(\nu)$ and $B(\nu)$ are convex, and thus their intersection is also convex. Moreover, $\Xi$ is a convex compact subset of a locally convex topological space $\M(\sX \times \sA)$, where $\M(\sX \times \sA)$ denotes the set of finite signed measures on $\sX \times \sA$. Therefore, to establish the existence of a fixed point of $\Gamma$ via Kakutani's fixed point, we only need to prove the following:

\begin{proposition}\label{prop3}
The graph of $\Gamma$, i.e., the set
	$$
	\Gr(\Gamma) := \left\{ (\nu,\xi) \in \Xi \times \Xi : \xi \in \Gamma(\nu)\right\},
	$$
is closed.
\end{proposition}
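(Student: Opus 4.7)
The plan is to verify sequential closedness: let $(\nu_n, \xi_n)$ be a sequence in $\Gr(\Gamma)$ with $(\nu_n, \xi_n) \to (\nu, \xi)$ in $\Xi \times \Xi$, and show $\xi \in \Gamma(\nu) = C(\nu) \cap B(\nu)$. Since $\Xi$ is a compact subset of the metrizable space $\P(\sX\times\sA)$, sequential closedness is equivalent to closedness.

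First, to show $\xi \in C(\nu)$, I would fix an arbitrary $g \in C_b(\sX)$ and consider $\phi_n(x,a) := \int g(y)\, p(dy|x,a,\nu_n^1)$. By Assumption~\ref{as1}(b) applied jointly in $(x,a,\mu)$, $\phi_n$ converges continuously to $\phi(x,a) := \int g(y)\, p(dy|x,a,\nu^1)$, and $\|\phi_n\|\leq\|g\|$ uniformly. A standard lemma on weakly convergent measures tested against continuously convergent bounded integrands then gives $\int g\, d\xi_n^1 = \int \phi_n\, d\nu_n \to \int \phi\, d\nu$, which is exactly the $C(\nu)$-defining equation evaluated against $g$. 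Arbitrariness of $g$ yields $\xi^1 = \int p(\,\cdot\,|x,a,\nu^1)\,\nu(dx,da)$.

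For $\xi \in B(\nu)$, the crucial preliminary is a continuity property for $\mu \mapsto h_\mu$. Using that $T_\mu$ is a $\beta$-contraction with $\beta = 1 - \lambda(\sX)$ uniformly in $\mu$, the Picard iterates $h_\mu^{(k)} := T_\mu^k 0$ satisfy $\|h_\mu - h_\mu^{(k)}\|_\infty \leq \beta^k \|c\|_\infty / (1-\beta)$, independently of $\mu$. I would prove by induction on $k$ that $(x,\mu) \mapsto h_\mu^{(k)}(x)$ is continuous in the sense of continuous convergence, i.e.\ $h_{\mu_n}^{(k)}(x_n) \to h_\mu^{(k)}(x)$ whenever $(x_n,\mu_n) \to (x,\mu)$. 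The inductive step combines continuity of $c$ (Assumption~\ref{as1}(a)), weak continuity of $\hat p$, the joint-convergence lemma applied to the $n$-dependent integrand $h_{\mu_n}^{(k)}$, and Berge's maximum theorem (applicable because $\sA$ is compact) to pass the minimization over $a$ through the limit. Bounding $|h_{\mu_n}(x_n)-h_\mu(x)|$ by $|h_{\mu_n}^{(k)}(x_n)-h_\mu^{(k)}(x)| + 2\beta^k\|c\|_\infty/(1-\beta)$ and taking $k$ large transfers continuous convergence from the iterates to the fixed points $h_\mu$. With this in hand, define $F_\mu(x,a) := c(x,a,\mu) + \int h_\mu(y)\, \hat p(dy|x,a,\mu) - T_\mu h_\mu(x) \geq 0$. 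The condition $\xi_n \in B(\nu_n)$ is $\int F_{\nu_n^1}\, d\xi_n = 0$; continuous convergence $F_{\nu_n^1} \to F_{\nu^1}$ together with the joint-convergence lemma gives $\int F_{\nu^1}\, d\xi = 0$, and nonnegativity of $F_{\nu^1}$ then forces $\xi\bigl(\{F_{\nu^1} = 0\}\bigr) = 1$, i.e., $\xi \in B(\nu)$.

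The main obstacle I anticipate is establishing the continuity of $\mu \mapsto h_\mu$: without compactness of $\sX$, the direct estimate $\|h_{\mu_n} - h_\mu\|_\infty \leq \frac{1}{1-\beta}\|T_{\mu_n} h_\mu - T_\mu h_\mu\|_\infty$ does not obviously close, because weak convergence $\mu_n \to \mu$ yields only pointwise convergence of $T_{\mu_n} h_\mu(x)$, not uniform convergence over $\sX$. Routing the argument through continuous convergence of the Bellman iterates and then using the uniform contraction rate to absorb the iterate error seems to be the cleanest way around this, and this is the technical heart of the closedness claim.
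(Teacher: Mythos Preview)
Your argument is correct, and for the $C(\nu)$ step and the continuity of $\mu \mapsto h_\mu$ it coincides with the paper's: both routes prove continuous convergence of the Picard iterates $T_\mu^k 0$ by induction (using compactness of $\sA$ to pass the minimum through the limit), and then use the uniform contraction rate $\beta = 1-\lambda(\sX)$ to transfer this to the fixed points. The genuine difference is in how you conclude $\xi \in B(\nu)$. You integrate the nonnegative defect $F_\mu(x,a) = c(x,a,\mu) + \int h_\mu\,d\hat p - h_\mu(x)$ and use the continuous-convergence lemma once more to pass $\int F_{\nu_n^1}\,d\xi_n = 0$ to the limit, then invoke nonnegativity. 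The paper instead works at the level of the support sets $A^{(n)} = \{F^{(n)} = 0\}$ and $A = \{F = 0\}$: it approximates $A^c$ from inside by $\xi$-continuity sets $B^M = \{F \geq \epsilon(M)\}$, shows $1_{A^{(n)} \cap B^M} \to 0$ continuously, and then uses Portmanteau to get $\xi(A) = 1$. Your route is shorter and more transparent because it exploits directly that the constraint defining $B(\nu)$ is the vanishing of a continuous nonnegative function, so weak convergence of measures against continuously convergent integrands suffices with no set-level bookkeeping; the paper's argument is more general in spirit (it would adapt to situations where the defining set is closed but not the zero set of a nice function) but here incurs extra work.
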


\begin{proof}
Here, we adapt the proof of \cite[Proposition 3.9]{SaBaRaSIAM} to the average cost criterion. Let $\bigl\{(\nu^{(n)},\xi^{(n)})\bigr\}_{n\geq1} \subset \Xi \times \Xi$ be such that $\xi^{(n)} \in \Gamma(\nu^{(n)})$ for all $n$ and $(\nu^{(n)},\xi^{(n)}) \rightarrow (\nu,\xi)$ as $n\rightarrow\infty$ for some $(\nu,\xi) \in \Xi \times \Xi$. We prove that $\xi \in \Gamma(\nu)$, which completes the proof.

We first prove that $\xi \in C(\nu)$. Note that, for all $n$, we have
\begin{align}
\xi^{(n)}_{1}(\,\cdot\,) = \int_{\sX \times \sA} p(\,\cdot\,|x,a,\nu^{(n)}_{1}) \, \nu^{(n)}(dx,da). \label{eqc}
\end{align}
Let $g \in C_b(\sX)$. Then, by \cite[Theorem 3.5]{Lan81}, we have
\begin{align}
\lim_{n\rightarrow\infty} \int_{\sX \times \sA} \int_{\sX}& g(y) \,  p(dy|x,a,\nu^{(n)}_{1}) \, \nu^{(n)}(dx,da) 
=\int_{\sX \times \sA} \int_{\sX} g(y) \, p(dy|x,a,\nu_{1})\, \nu(dx,da) \nonumber
\end{align}
since $\nu^{(n)} \rightarrow \nu$ weakly and $\int_{\sX} g(y) \, p(dy|x,a,\nu^{(n)}_{1})$ converges to $\int_{\sX} g(y) \, p(dy|x,a,\nu_{1})$ continuously\footnote{Suppose $g$, $g_n$ ($n\geq1$) are measurable functions on metric space $\sE$. The sequence $g_n$ is said to converge to $g$ continuously if $\lim_{n\rightarrow\infty}g_n(e_n)=g(e)$ for any $e_n\rightarrow e$ where $e \in \sE$.} (see \cite[p. 388]{Ser82}). This implies that the sequence of measures on the right-hand side of (\ref{eqc}) converges weakly to $\int_{\sX \times \sA} p(\,\cdot\,|x,a,\nu_{1}) \, \nu(dx,da)$. Therefore, we have
\begin{align}
\xi_{1}(\,\cdot\,) = \int_{\sX \times \sA} p(\,\cdot\,|x,a,\nu_{1}) \, \nu(dx,da), \nonumber
\end{align}
which means that $\xi \in C(\nu)$.

We now prove that $\xi \in B(\nu)$, which implies that $\xi \in \Gamma(\nu)$. To prove this, we use the continuity properties of the average cost optimality equation. For each $n$, let us define
\begin{align}
F^{(n)}(x,a) &= c(x,a,\nu_1^{(n)}) + \int_{\sX} h_{\nu_1^{(n)}}(y) \, \hat{p}(dy|x,a,\nu_1^{(n)}) \nonumber \\
\intertext{and}
F(x,a) &= c(x,a,\nu_1) + \int_{\sX} h_{\nu_1}(y) \, \hat{p}(dy|x,a,\nu_1). \nonumber
\end{align}
By definition,
\begin{align}
h_{\nu_1^{(n)}}(x) = \inf_{a \in \sA} F^{(n)}(x,a) \text{ } \text{ and } \text{ } h_{\nu_1}(x) = \inf_{a \in \sA} F(x,a). \nonumber
\end{align}
By assumption, we have
\begin{align}
1 = \xi^{(n)}\biggl( \biggl\{ (x,a): F^{(n)}(x,a) = h_{\nu_1^{(n)}}(x) \biggr\} \biggr), \text{ } \text{for all $n$}. \nonumber
\end{align}
Let $A^{(n)} = \bigl\{ (x,a): F^{(n)}(x,a) = h_{\nu_1^{(n)}}(x)\bigr\}$. Since both $F^{(n)}$ and $h_{\nu_1^{(n)}}$ are continuous, $A^{(n)}$ is closed. Define $A = \bigl\{ (x,a): F(x,a) = h_{\nu_1}(x) \bigr\}$ which is also closed as both $F$ and $h_{\nu_1}$ are continuous.

We first prove $\xi \in B(\nu)$ by supposing that $F^{(n)}$ converges to $F$ continuously and $h_{\nu_1^{(n)}}$ converges to $h_{\nu_1}$ continuously, as $n\rightarrow\infty$. Then, we prove these statements.

For each $M\geq1$, define closed set $B^M = \bigl\{ (x,a): F(x,a) \geq h_{\nu_1}(x) + \epsilon(M) \bigr\}$, where $\epsilon(M) \rightarrow 0$ as $M\rightarrow \infty$. Since both $F$ and $h_{\nu_1}$ are continuous, we can choose $\{\epsilon(M)\}_{M\geq1}$ so that $\xi(\partial B^M) = 0$ for each $M$. As $A^c = \bigcup_{M=1}^{\infty} B^M$ and $B^M \subset B^{M+1}$, we have by the monotone convergence theorem
\begin{align}
\xi^{(n)}\big(A^c \cap A^{(n)}\big) = \liminf_{M\to\infty} \xi^{(n)}\big(B^M \cap A^{(n)}). \nonumber
\end{align}
Hence, we have
\begin{align}
1 &= \limsup_{n\rightarrow\infty} \liminf_{M\rightarrow\infty} \biggl\{ \xi^{(n)}\big(A \cap A^{(n)}\big) + \xi^{(n)}\big(B^M \cap A^{(n)}\big)\biggr\} \nonumber\\
&\leq \liminf_{M\rightarrow\infty} \limsup_{n\rightarrow\infty}  \biggl\{\xi^{(n)}\big(A \cap A^{(n)}\big) + \xi^{(n)}\big(B^M \cap A^{(n)}\big)\biggr\}. \nonumber
\end{align}
\noindent Fix any $M$. Note that $\xi^{(n)}$ converges weakly to $\xi$ as $n\rightarrow\infty$ when both measures are restricted to $B^M$, as $B^M$ is closed and $\xi(\partial B^M)=0$ (see \cite[Theorem 8.2.3]{Bog07}). Furthermore, $1_{A^{(n)} \cap B^M}$ converges continuously to $0$. Indeed, if $(x^{(n)},a^{(n)}) \rightarrow (x,a)$ in $B^M$, then
\begin{align}
\lim_{n\rightarrow\infty} F^{(n)}(x^{(n)},a^{(n)}) &= F(x,a) \nonumber \\
&\geq h_{\nu_1}(x) + \epsilon(M) \nonumber \\
&= \lim_{n\rightarrow\infty} h_{\nu_1^{(n)}}(x^{(n)}) + \epsilon(M). \nonumber
\end{align}
Hence, for large enough $n$, we have $F^{(n)}(x^{(n)},a^{(n)}) > h_{\nu_1^{(n)}}(x^{(n)})$, which implies that $(x^{(n)},a^{(n)}) \not\in A^{(n)}$, and so, $1_{A^{(n)} \cap B^M}(x^{(n)},a^{(n)}) = 0$. Therefore, by \cite[Theorem 3.5]{Lan81}, for each $M$, we have
\begin{align*}
\limsup_{n\rightarrow\infty} \xi^{(n)}\big(B^M \cap A^{(n)}\big) = 0.
\end{align*}
Therefore, we obtain
\begin{align*}
1 &\leq  \limsup_{n\rightarrow\infty} \xi^{(n)}\big(A \cap A^{(n)}\big)\\
&\le \limsup_{n\rightarrow\infty} \xi^{(n)}(A) \nonumber \\
&\leq \xi(A), \nonumber
\end{align*}
where the last inequality follows from the Portmanteau Theorem (see \cite[Theorem 2.1]{Bil99}). Hence, $\xi(A)=1$. This means that $\xi \in B(\nu)$. Therefore, $\xi \in \Gamma(\nu)$ which completes the proof under the condition that $F^{(n)}$ converges to $F$ continuously and $h_{\nu_1^{(n)}}$ converges to $h_{\nu_1}$ continuously, as $n\rightarrow\infty$, which we prove next.

For continuous functions, continuous convergence is the same as uniform convergence on compact sets (see \cite[Lemma 2.1]{Lan81}). Therefore, we show that $F^{(n)}$ uniformly converges to $F$ over compact sets and $h_{\nu_1^{(n)}}$ uniformly converges to $h_{\nu_1}$ over compact sets, as these functions are continuous.
Furthermore, if $h_{\nu_1^{(n)}}$ converges to $h_{\nu_1}$ continuously, then $F^{(n)}$ also converges to $F$ continuously. Indeed, let $(x^{(n)},a^{(n)}) \rightarrow (x,a)$. By \cite[Theorem 3.5]{Lan81} we have
\begin{align}
\lim_{n\rightarrow\infty} F^{(n)}(x^{(n)},a^{(n)}) &= \lim_{n\rightarrow\infty} \biggl[ c(x,a,\nu_1^{(n)}) + \int_{\sX} h_{\nu_1^{(n)}}(y) \, \hat{p}(dy|x,a,\nu_1^{(n)}) \biggr]\nonumber \\
&= \biggl[ c(x,a,\nu_1) + \int_{\sX} h_{\nu_1}(y) \, \hat{p}(dy|x,a,\nu_1) \biggr]\nonumber \\
&= F(x,a). \nonumber
\end{align}
Therefore, it is enough to prove that $h_{\nu_1^{(n)}}$ uniformly converges to $h_{\nu_1}$ over compact sets. The following proposition establishes this result. 

\begin{proposition}\label{prop4}
For any compact $K\subset \sX$, we have
\begin{align}
\lim_{n\rightarrow\infty} \sup_{x \in K} \bigl| h_{\nu_1^{(n)}}(x) - h_{\nu_1}(x)| = 0. \nonumber
\end{align}
\end{proposition}

\begin{proof}
To ease the notation, let $T^{(n)} = T_{\nu_1^{(n)}}$, $h^{(n)} = h_{\nu_1^{(n)}}$, $T = T_{\nu_1}$, and $h = h_{\nu_1}$. Let $u^{(n)}_0 = u_0 = 0$ and
\begin{align}
u^{(n)}_{k+1} = T^{(n)} u^{(n)}_k \text{ } \text{and} \text{ } u_{k+1} = T u_k. \nonumber
\end{align}
Since $T^{(n)}$ and $T$ are contractive operators on $C_b(\sX)$ with modulus $\beta$, it can be proved that
\begin{align}
\| u^{(n)}_k - h^{(n)} \| , \, \| u_k - h \|  \leq L_0 \, \beta^k, \label{eq7}
\end{align}
for some constant $L_0$.

For any compact $K \subset \sX$, one can prove that 
\begin{align}
\lim_{n\rightarrow\infty} \sup_{x \in K} \bigl| u^{(n)}_{k}(x) - u_{k}(x) \bigr| = 0, \label{compactconv}
\end{align}
for all $k\geq0$. Indeed, fix any compact $K \subset \sX$. Since $u^{(n)}_0 = u_0 = 0$, the claim trivially holds for $k=0$. Suppose that the claim holds for $k$. Then, consider $k+1$:
\begin{align}
&\sup_{x \in K} \bigl| u^{(n)}_{k+1}(x) - u_{k+1}(x) \bigr| \nonumber \\
&\phantom{xxxxxx}=\sup_{x \in K} \biggl| \min_{a \in \sA} \biggl[ c(x,a,\nu^{(n)}_{1}) + \int_{\sX} u^{(n)}_{k}(y) \, \hat{p}(dy|x,a,\nu^{(n)}_{1}) \biggr] \nonumber \\
&\phantom{xxxxxxxxxxxxxxxx}- \min_{a \in \sA}\biggl[ c(x,a,\nu_{1}) + \beta \int_{\sX} u_{k}(y) \, \hat{p}(dy|x,a,\nu_{1}) \biggr] \biggr| \nonumber \\
&\phantom{xxxxxx}\leq \sup_{(x,a) \in K \times \sA} \bigl| c(x,a,\nu_{1}^{(n)}) - c(x,a,\nu_{1}) \bigr| \nonumber \\
&\phantom{xxxxxxxxxxxxxxxx}+ \sup_{(x,a) \in K \times \sA} \biggl| \int_{\sX} u^{(n)}_{k}(y) \, \hat{p}(dy|x,a,\nu^{(n)}_{1}) \nonumber \\
&\phantom{xxxxxxxxxxxxxxxxxxxxxxxxxxxxxxxxx}- \int_{\sX} u_{k}(y) \, \hat{p}(dy|x,a,\nu_{1}) \biggr|. \nonumber
\end{align}
Note that $c(\cdot\,,\,\cdot\,,\nu^{(n)})$ converges to $c(\cdot\,,\,\cdot\,,\nu)$ continuously as $n\rightarrow\infty$. Furthermore,  since $u^{(n)}_{k}$ converges to $u_{k}$ continuously, by \cite[Theorem 3.5]{Lan81} $\int_{\sX} u^{(n)}_{k}(y) \hat{p}(dy|\cdot\,,\,\cdot\,,\nu^{(n)}_{1})$ converges to $\int_{\sX} u_{k}(y) \hat{p}(dy|\cdot\,,\,\cdot\,,\nu_{1})$ continuously as $n\rightarrow\infty$. Since continuous convergence is equivalent to uniform convergence over compact sets for continuous functions, the last expression goes to zero as $n\rightarrow\infty$. This proves (\ref{compactconv}).

Using (\ref{compactconv}), we now complete the proof of Proposition~\ref{prop4}. Fix any compact $K \subset \sX$. For all $k\geq0$, we have
\begin{align}
\sup_{x \in K} \bigl| h^{(n)}(x) - h(x)| &\leq \| h^{(n)} - u^{(n)}_{k}\| + \sup_{x \in K} \bigl| u^{(n)}_{k}(x) - u_{k}(x) \bigr| + \|u_{k} - h\| \nonumber \\
&\leq 2 L_0  \beta^k + \sup_{x \in K} \bigl| u^{(n)}_{k}(x) - u_{k}(x) \bigr| \text{ } \text{ (by (\ref{eq7}))}. \nonumber
\end{align}
This last expression can be made arbitrarily small by first choosing large enough $k$ and then large enough $n$. 
\end{proof}
This completes the proof of Proposition~\ref{prop3} as $F^{(n)}$ converges to $F$ continuously and $h_{\nu_1^{(n)}}$ converges to $h_{\nu_1}$ continuously by Proposition~\ref{prop4}. 
\end{proof}

Recall that $\Xi$ is a compact convex subset of the locally convex topological space $\M(\sX \times \sA)$. Furthermore, $\Gamma$ has closed graph by Proposition~\ref{prop3}, and it takes nonempty convex values. Therefore, by Kakutani's fixed point theorem (see \cite[Corollary 17.55]{AlBo06}), $\Gamma$ has a fixed point. Then, Theorem~\ref{thm:MFE} follows from Proposition~\ref{prop1}.

\section{Proof of Theorem~\ref{appr-thm}}\label{sec4-1}

To prove Theorem~\ref{appr-thm}, we mimic the idea used in \cite{Wie19} of formulating $N$-agent game as a stochastic control problem over an extended state space and action space. Namely, we consider $N$-agent game as a stochastic control problem with state space $\sX^N = \prod_{i=1}^N \sX$, action space $\sA^N = \prod_{i=1}^N \sA$, transition probability
\begin{align}
P^N(dy_1,\ldots,dy_N|x_1,\ldots,x_N,a_1,\ldots,a_N) = \prod_{i=1}^N p(dy_i|x_i,a_i), \nonumber
\end{align}
and one-stage cost function
\begin{align}
C^N(x_1,\ldots,x_N,a_1,\ldots,a_N) = c\left(x_1,a_1,\frac{1}{N}\sum_{i=1}^N \delta_{x_i}\right). \nonumber 
\end{align}
Here, the initial state $(x_1(0),\ldots,x_N(0))$ is generated by the product measure $\mu_0^N = \prod_{i=1}^N \mu_0$. For any $N$-tuple of policies ${\boldsymbol \pi}^{(N)}$, the average-cost of this stochastic control problem is the same as the average-cost of Agent~1 in the game; that is,
$$
J_1^N({\boldsymbol \pi}^{(N)}) = \limsup_{T\rightarrow\infty} \frac{1}{T} E\biggl[ \sum_{t=0}^{T-1} C^N\bigl(x_1^N(t),\ldots,x_N^N(t),a_1^N(t),\ldots,a_N^N(t)\bigr) \biggr]. \nonumber
$$
Suppose that Assumptions~\ref{as1} and \ref{as2} hold.
Then, for all $N$, the following are satisfied:
\begin{itemize}
\item [(i)] The one-stage cost function $C^N$ is bounded and continuous.
\item [(ii)] The stochastic kernel $P^N$ is weakly continuous and satisfies 
\end{itemize}
\begin{align}
&P^N(\,\cdot\,|x_1,\ldots,x_N,a_1,\ldots,a_N) \geq \lambda^N(\,\cdot\,) = \prod_{i=1}^N \lambda(\,\cdot\,), \text{ } \forall \text{ } (x_1,\ldots,x_N) \in \sX^N,(a_1,\ldots,a_N) \in \sA^N \label{Nmin} \\
&\sup_{(a_1,\ldots,a_N)} \int_{\sX^N} w^N(y_1,\ldots,y_N) \, \hat{p}^N(dy_1,\ldots,y_N|x_1,\ldots,x_N,a_1,\ldots,a_N) \leq \alpha^N \, w^N(x_1,\ldots,x_N), \text{ } \forall \text{ } (x_1,\ldots,x_N) \in \sX^N\label{Ndrift}
\end{align}
where $w^N(x_1,\ldots,x_N) = \prod_{i=1}^N w(y_i)$ and 
$\hat{p}^N(\,\cdot\,|x_1,\ldots,x_N,a_1,\ldots,a_N) = p(\,\cdot\,|x_1,\ldots,x_N,a_1,\ldots,a_N) - \lambda^N(\,\cdot\,)$. Under (\ref{Nmin}) and (\ref{Ndrift}), for any ${\boldsymbol \pi}^{(N)}$, the following stochastic kernel on $\sX^N$ given $\sX^N$ 
$$
P^{{\boldsymbol \pi}^{(N)}}(\,\cdot\,|x_1,\ldots,x_N) = \int_{\sA^N} P^N(\,\cdot\,|x_1,\ldots,x_N,a_1,\ldots,a_N) \, \prod_{i=1}^N \pi(da_i|x_i) 
$$
has an unique invariant distribution $\mu_{{\boldsymbol \pi}^{(N)}} \in \P(\sX^N)$ and we have 
$$
J_1^N({\boldsymbol \pi}^{(N)}) = \int_{\sX^N \times \sA^N} c\left(x_1,a_1,\frac{1}{N} \sum_{i=1}^N \delta_{x_i}\right) \, \prod_{i=1}^N \pi_i(da_i|x_i) \, \mu_{{\boldsymbol \pi}^{(N)}}(dx_1,\ldots,dx_N). \nonumber 
$$
(see \cite[Theorem 3.3]{Veg03}, \cite[Lemma 3.4]{GoHe95}). Moreover, 
$$\mu_{{\boldsymbol \pi}^{(N)}}(dx_1,\ldots,dx_N) = \prod_{i=1}^N \mu_{\pi_i}(dx_i),$$
where, for each $i=1,\ldots,N$, $\mu_{\pi_i}$ is the unique invariant probability measure of the stochastic kernel
$$
\int_{\sA} p(\,\cdot\,|x,a) \, \pi_i(da|x). 
$$ 
The following result is the key to prove Theorem~\ref{appr-thm}.

\begin{proposition}\label{theorem3}
For any $\pi \in \Pi_1$, we have
\begin{align}
\bigl| J_{\mu^*}(\pi) - J_1^N\left(\pi,{\boldsymbol \pi}^{(N*)}_{-1}\right) \bigr| \leq \epsilon(N), \nonumber
\end{align}
where 
$$
\epsilon(N) = \int_{\sX^{N-1} \times \sA^{N-1}} \omega_c\left(\rho\left(\mu^*,\frac{1}{N-1}\sum_{i=1}^N \delta_{x_i}\right)\right) \, \mu^*(dx_2) \ldots \mu^*(dx_N) + \omega_c\left(2 \cdot \left(1-\frac{N-1}{N}+\frac{1}{N}\right)\right). \nonumber
$$
Note that, by law of large numbers and the fact that $\omega_c(r) \rightarrow 0$ as $r\rightarrow0$, $\epsilon(N)\rightarrow0$ as $N\rightarrow\infty$.
\end{proposition}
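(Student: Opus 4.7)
The plan is to exploit the decoupling that Assumption~\ref{as2}-(a) induces. Since $p(\,\cdot\,|x,a)$ has no $\mu$-dependence, in both the mean-field cost $J_{\mu^*}(\pi)$ and the $N$-agent cost $J_1^N(\pi,{\boldsymbol \pi}^{(N*)}_{-1})$ Agent~1 is driven by the \emph{same} Markov kernel $\int_{\sA} p(\,\cdot\,|x,a)\,\pi(da|x)$ on $\sX$, which under Assumption~\ref{as1}-(d) and (e) admits a unique invariant distribution $\mu_\pi\in\P_c(\sX)$. Similarly, under $\pi^*$ the Agents $2,\ldots,N$ form i.i.d.\ chains each with invariant distribution $\mu_{\pi^*}=\mu^*$ (using $\mu^*\in\Lambda(\pi^*)$), so the joint invariant measure factors as $\mu_\pi\otimes(\mu^*)^{\otimes(N-1)}$. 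Invoking the integral representation of the average cost as expectation under the invariant measure (as already used around~(\ref{invariant})), both cost quantities become explicit integrals whose integrands differ only in the third slot of $c$; subtracting and moving the absolute value inside yields
\begin{align*}
\bigl|J_{\mu^*}(\pi) - J_1^N(\pi,{\boldsymbol \pi}^{(N*)}_{-1})\bigr| \leq \int \bigl|c(x_1,a_1,\mu^*)-c(x_1,a_1,\mu_N)\bigr|\,\pi(da_1|x_1)\prod_{i=2}^N \pi^*(da_i|x_i)\,\mu_\pi(dx_1)\prod_{i=2}^N\mu^*(dx_i),
\end{align*}
where I abbreviate $\mu_N := \tfrac{1}{N}\sum_{i=1}^N \delta_{x_i}$.

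Next, to split the integrand so that the two summands of $\epsilon(N)$ emerge naturally, I introduce the intermediate empirical measure $\mu_{N-1} := \tfrac{1}{N-1}\sum_{i=2}^N \delta_{x_i}$ and apply the triangle inequality \emph{for absolute values} (not for $\omega_c$, whose subadditivity is not assumed):
\[
\bigl|c(x_1,a_1,\mu^*)-c(x_1,a_1,\mu_N)\bigr| \leq \omega_c\bigl(\rho(\mu^*,\mu_{N-1})\bigr) + \omega_c\bigl(\rho(\mu_{N-1},\mu_N)\bigr).
\]
Since $\mu_N = \tfrac{N-1}{N}\mu_{N-1}+\tfrac{1}{N}\delta_{x_1}$, the definition of $\rho$ together with $\|f_m\|\leq 1$ gives the \emph{deterministic} bound $\rho(\mu_{N-1},\mu_N)\leq 2\bigl(1-\tfrac{N-1}{N}+\tfrac{1}{N}\bigr)$, so the second summand contributes at most $\omega_c\bigl(2(1-\tfrac{N-1}{N}+\tfrac{1}{N})\bigr)$ after integration. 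The first summand $\omega_c(\rho(\mu^*,\mu_{N-1}))$ depends only on $x_2,\ldots,x_N$, so integration against $\pi(da_1|x_1)\mu_\pi(dx_1)$ and the $\pi^*(da_i|x_i)$'s reduces it to $\int\omega_c(\rho(\mu^*,\mu_{N-1}))\prod_{i=2}^N\mu^*(dx_i)$. Summing the two contributions is precisely the claimed $\epsilon(N)$.

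The main technical ingredient is the integral representation of the average cost as expectation against the unique invariant measure, together with the product factorization of the joint invariant measure for the decoupled $N$-agent system; both rest on the drift/minorization Assumption~\ref{as1}-(d) and (e) and are essentially the representations already stated in the paper immediately before the proposition. Once these are in hand, the remaining computation is a clean triangle-inequality split via the intermediate measure $\mu_{N-1}$, which is the key trick that lets us avoid assuming (unjustified) subadditivity of $\omega_c$.
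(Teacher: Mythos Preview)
Your proposal is correct and follows essentially the same route as the paper's proof: both use the integral representation of the average cost via the unique invariant measure, the product factorization $\mu_\pi\otimes(\mu^*)^{\otimes(N-1)}$ afforded by Assumption~\ref{as2}-(a), and the triangle-inequality split through the intermediate empirical measure $\mu_{N-1}=\tfrac{1}{N-1}\sum_{i=2}^N\delta_{x_i}$ to produce the two $\omega_c$ terms. The only cosmetic difference is that you apply the triangle inequality pointwise to the integrand before integrating, whereas the paper inserts and subtracts the intermediate cost at the level of the integrals; the resulting bounds are identical.
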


\begin{proof}
Fix any policy $\pi$ for Agent~1. Since $\mu_{\pi}$ is the unique invariant distribution of the stochastic kernel $\int_{\sA} p(\,\cdot\,|x,a) \, \pi(da|x)$, we can write 
$$J_{\mu^*}(\pi) = \int_{\sX \times \sA} c(x_1,a_1,\mu^*) \, \pi(da_1|x_1) \, \mu_{\pi}(dx_1)$$ 
(see \cite[Theorem 3.3]{Veg03}, \cite[Lemma 3.4]{GoHe95}). Then, we have 
\begin{align}
&\bigl| J_{\mu^*}(\pi) - J_1^N\left(\pi,{\boldsymbol \pi}^{(N*)}_{-1}\right) \bigr| \nonumber \\
&= \biggl| \, \int_{\sX \times \sA} c(x_1,a_1,\mu^*) \, \pi(da_1|x_1) \, \mu_{\pi}(dx_1) \nonumber \\
&- \int_{\sX^N\times\sA^N} c\left(x_1,a_1,\frac{1}{N}\sum_{i=1}^N \delta_{x_i}\right) \, \pi(da_1|x_1) \, \prod_{i=2}^N \pi^*(da_i|x_i) \, \mu_{\pi,{\boldsymbol \pi}^{(N*)}_{-1}}(dx_1,\ldots,dx_N) \, \biggr| \nonumber \\
&\overset{(a)}{=} \biggl| \, \int_{\sX \times \sA} c(x_1,a_1,\mu^*) \, \pi(da_1|x_1) \, \mu_{\pi}(dx_1) \nonumber \\
&- \int_{\sX\times\sA} \biggl[ \int_{\sX^{N-1}} c\left(x_1,a_1,\frac{\delta_{x_1}}{N}+\frac{N-1}{N}\frac{1}{N-1}\sum_{i=2}^N \delta_{x_i}\right) \, \mu^*(dx_2)\ldots\mu^*(dx_N) \biggr] \pi(da_1|x_1) \, \mu_{\pi}(dx_1) \, \biggr| \nonumber\\
&\leq \biggl| \, \int_{\sX \times \sA} c(x_1,a_1,\mu^*) \, \pi(da_1|x_1) \, \mu_{\pi}(dx_1) \nonumber \\
&- \int_{\sX\times\sA} \biggl[ \int_{\sX^{N-1}} c\left(x_1,a_1,\frac{1}{N-1}\sum_{i=2}^N \delta_{x_i}\right) \, \mu^*(dx_2)\ldots\mu^*(dx_N) \biggr] \pi(da_1|x_1) \, \mu_{\pi}(dx_1) \, \biggr| \nonumber\\
&+\biggl| \, \int_{\sX\times\sA} \biggl[ \int_{\sX^{N-1}} c\left(x_1,a_1,\frac{1}{N-1}\sum_{i=2}^N \delta_{x_i}\right) \, \mu^*(dx_2)\ldots\mu^*(dx_N) \biggr] \pi(da_1|x_1) \, \mu_{\pi}(dx_1) \nonumber \\
&- \int_{\sX\times\sA} \biggl[ \int_{\sX^{N-1}} c\left(x_1,a_1,\frac{\delta_{x_1}}{N}+\frac{N-1}{N}\frac{1}{N-1}\sum_{i=2}^N \delta_{x_i}\right) \, \mu^*(dx_2)\ldots\mu^*(dx_N) \biggr] \pi(da_1|x_1) \, \mu_{\pi}(dx_1) \, \biggr| \nonumber \\
&\overset{(b)}{\leq} \int_{\sX^{N-1} } \omega_c\left(\rho(\mu^*,\frac{1}{N-1}\sum_{i=2}^N \delta_{x_i})\right) \, \mu^*(dx_2) \ldots \mu^*(dx_N) + \omega_c\left(2 \cdot \left(1-\frac{N-1}{N}+\frac{1}{N}\right)\right), \nonumber
\end{align}
where (a) follows from the fact that $\mu^*$ is the unique invariant probability measure of the stochastic kernel $\int_{\sA} p(\,\cdot\,|x,a) \, \pi^*(da|x)$ and $\mu_{\pi}$ is the unique invariant probability measure of the stochastic kernel $\int_{\sA} p(\,\cdot\,|x,a) \, \pi(da|x)$, and (b) follows from 
$$
\rho\left(\frac{1}{N-1}\sum_{i=2}^N \delta_{x_i},\frac{\delta_{x_1}}{N}+\frac{N-1}{N}\frac{1}{N-1}\sum_{i=2}^N \delta_{x_i}\right) = 2 \cdot \left(1-\frac{N-1}{N}+\frac{1}{N}\right).
$$
This completes the proof.
\end{proof}

Now, we are ready to prove Theorem~\ref{appr-thm}. We prove that, for sufficiently large $N$, we have
\begin{align}
J_i^{N}({\boldsymbol \pi}^{(N)}) &\leq \inf_{\pi \in \Pi_i} J_i^{N}({\boldsymbol \pi}^{(N)}_{-i},\pi) + \varepsilon \label{eq13}
\end{align}
for each $i=1,\ldots,N$. Since the transition probabilities and the one-stage cost functions are the same for all agents in the game, it is sufficient to prove (\ref{eq13}) for Agent~$1$ only. Given $\epsilon > 0$, for each $N\geq1$, let $\tpi^{(N)} \in \Pi_1$ be such that
\begin{align}
J_1^{N} (\tpi^{(N)},\pi^*,\ldots,\pi^*) < \inf_{\pi \in \Pi_1} J_1^{N} (\pi,\pi^*,\ldots,\pi^*) + \frac{\varepsilon}{3}. \nonumber
\end{align}
Let $N(\varepsilon)$ be such that $\epsilon(N) \leq \frac{\varepsilon}{3}$ for all $N \geq N(\varepsilon)$. Then, by Proposition~\ref{theorem3}, we have
\begin{align}
\inf_{\pi \in \Pi_1} J_1^{N} (\pi,\pi^*,\ldots,\pi^*) + \varepsilon &> J_1^{N} (\tpi^{(N)},\pi^*,\ldots,\pi^*) + \frac{2\varepsilon}{3} \nonumber \\
&\geq J_{\mu^*}(\tpi^{(N)}) + \frac{\varepsilon}{3}\nonumber \\
&\geq \inf_{\pi \in \Pi} J_{\mu^*}(\pi) + \frac{\varepsilon}{3} \nonumber \\
&= J_{\mu^*}(\pi^*) + \frac{\varepsilon}{3}\nonumber \\
&\geq J_1^{N} (\pi^*,\pi^*,\ldots,\pi^*). \nonumber
\end{align}
This completes the proof of Theorem~\ref{appr-thm}. 

\section{Conclusion}\label{conc}

This paper has considered discrete-time mean-field games subject to average cost criteria, for Polish state and action spaces. Under drift and minorization conditions on the state transition kernel, the existence of a mean-field equilibrium has been established for infinite-population game models using the average cost optimality equation. We have then applied the policy in mean-field equilibrium to the finite population game and have proved that it constitutes an approximate Nash equilibrium for games with a sufficiently large number of agents.

One interesting future direction of research to pursue is to study partially observed version of the same problem. By converting partially-observed system into a fully-observed one over a belief-state space, it might be possible to establish similar results for the partially-observed case.

\section*{Acknowledgment}
This research was supported by The Scientific and Technological Research Council of Turkey (T\"{U}B\.{I}TAK) B\.{I}DEB 2232 Research Grant. The author is grateful to Professor Tamer Ba{\c{s}}ar and Professor Maxim Raginsky for their constructive comments.

\bibliographystyle{apa}

\begin{thebibliography}{}

\bibitem[\protect\astroncite{Adlakha et~al.}{2015}]{AdJoWe15}
Adlakha, S., Johari, R., and Weintraub, G. (2015).
\newblock Equilibria of dynamic games with many players: Existence,
  approximation, and market structure.
\newblock {\em Journal of Economic Theory}, 156:269--316.

\bibitem[\protect\astroncite{Aliprantis and Border}{2006}]{AlBo06}
Aliprantis, C. and Border, K. (2006).
\newblock {\em Infinite Dimensional Analysis}.
\newblock Berlin, Springer, 3rd ed.

\bibitem[\protect\astroncite{Bensoussan et~al.}{2013}]{BeFrPh13}
Bensoussan, A., Frehse, J., and Yam, P. (2013).
\newblock {\em Mean {F}ield {G}ames and {M}ean {F}ield {T}ype {C}ontrol
  {T}heory}.
\newblock Springer, New York.

\bibitem[\protect\astroncite{Billingsley}{1999}]{Bil99}
Billingsley, P. (1999).
\newblock {\em Convergence of Probability Measures}.
\newblock New York: Wiley, 2nd edition.

\bibitem[\protect\astroncite{Biswas}{2015}]{Bis15}
Biswas, A. (2015).
\newblock Mean field games with ergodic cost for discrete time {M}arkov
  processes.
\newblock arXiv:1510.08968.

\bibitem[\protect\astroncite{Bogachev}{2007}]{Bog07}
Bogachev, V. (2007).
\newblock {\em Measure Theory: Volume II}.
\newblock Springer.

\bibitem[\protect\astroncite{Cardaliaguet}{2011}]{Ca11}
Cardaliaguet, P. (2011).
\newblock {\em Notes on {M}ean-field {G}ames}.

\bibitem[\protect\astroncite{Carmona and Delarue}{2013}]{CaDe13}
Carmona, R. and Delarue, F. (2013).
\newblock Probabilistic analysis of mean-field games.
\newblock {\em SIAM Journal on Control and Optimization}, 51(4):2705--2734.

\bibitem[\protect\astroncite{Elliot et~al.}{2013}]{ElLiNi13}
Elliot, R., Li, X., and Ni, Y. (2013).
\newblock Discrete time mean-field stochastic linear-quadratic optimal control
  problems.
\newblock {\em Automatica}, 49:3222--3233.

\bibitem[\protect\astroncite{Gomes et~al.}{2010}]{GoMoSo10}
Gomes, D., Mohr, J., and Souza, R. (2010).
\newblock Discrete time, finite state space mean field games.
\newblock {\em Journal de Mathématiques Pures et Appliquées}, 93:308--328.

\bibitem[\protect\astroncite{Gomes and Sa\'{u}de}{2014}]{GoSa14}
Gomes, D. and Sa\'{u}de, J. (2014).
\newblock Mean field games models - a brief survey.
\newblock {\em Dynamic Games and Applications}, 4(2):110--154.

\bibitem[\protect\astroncite{Gordienko and Hernandez-Lerma}{1995}]{GoHe95}
Gordienko, E. and Hernandez-Lerma, O. (1995).
\newblock Average cost {M}arkov control processes with weighted norms:
  Existence of canonical policies.
\newblock {\em APPLICATIONES MATHEMATICAE}, 23(2):199--218.

\bibitem[\protect\astroncite{Hern\'andez-Lerma and Lasserre}{1996}]{HeLa96}
Hern\'andez-Lerma, O. and Lasserre, J. (1996).
\newblock {\em Discrete-Time {M}arkov Control Processes: Basic Optimality
  Criteria}.
\newblock Springer.

\bibitem[\protect\astroncite{Hern\'andez-Lerma and Lasserre}{1999}]{HeLa99}
Hern\'andez-Lerma, O. and Lasserre, J. (1999).
\newblock {\em Further Topics on Discrete-Time {M}arkov Control Processes}.
\newblock Springer.


\bibitem[\protect\astroncite{O.~Hern\'andez-Lerma and R.~Montes-De-Oca and R.~Cavazos-Cadena}{1991}]{HeMoRo91}
Hern\'andez-Lerma, O. and Montes-De-Oca, R. and Cavazos-Cadena, R. (1991).
\newblock Recurrence conditions for {M}arkov decision processes with {B}orel state space: a survey.
\newblock {\em Annals of Operations Research}, 28(1):29--46.

\bibitem[\protect\astroncite{Huang}{2010}]{Hua10}
Huang, M. (2010).
\newblock Large-population {LQG} games involving major player: The {N}ash
  certainity equivalence principle.
\newblock {\em SIAM Journal on Control and Optimization}, 48(5):3318--3353.

\bibitem[\protect\astroncite{Huang et~al.}{2007}]{HuCaMa07}
Huang, M., Caines, P., and Malham\'{e}, R. (2007).
\newblock Large-population cost coupled {LQG} problems with nonuniform agents:
  Individual-mass behavior and decentralized $\epsilon$-{N}ash equilibria.
\newblock {\em IEEE Transactions on Automatic Control}, 52(9):1560--1571.

\bibitem[\protect\astroncite{Huang et~al.}{2006}]{HuMaCa06}
Huang, M., Malham\'{e}, R., and Caines, P. (2006).
\newblock Large population stochastic dynamic games: Closed loop
  {M}c{K}ean-{V}lasov sysyems and the {N}ash certainity equivalence principle.
\newblock {\em Communications in Information Systems}, 6:221--252.

\bibitem[\protect\astroncite{Langen}{1981}]{Lan81}
Langen, H. (1981).
\newblock Convergence of dynamic programming models.
\newblock {\em Mathematics of Operations Research}, 6(4):493--512.

\bibitem[\protect\astroncite{Lasry and P.Lions}{2007}]{LaLi07}
Lasry, J. and P.Lions (2007).
\newblock Mean field games.
\newblock {\em Japanese Journal of Mathematics}, 2:229--260.

\bibitem[\protect\astroncite{Moon and Ba\c{s}ar}{2015}]{MoBa15}
Moon, J. and Ba\c{s}ar, T. (2015).
\newblock Discrete-time decentralized control using the risk-sensitive
  performance criterion in the large population regime: a mean field approach.
\newblock In {\em ACC 2015}, Chicago.

\bibitem[\protect\astroncite{Moon and Ba{\c{s}}ar}{2016a}]{MoBa16-cdc}
Moon, J. and Ba{\c{s}}ar, T. (2016a).
\newblock Discrete-time mean field {S}tackelberg games with a large number of
  followers.
\newblock In {\em CDC 2016}, Las Vegas.

\bibitem[\protect\astroncite{Moon and Ba{\c{s}}ar}{2016b}]{MoBa16}
Moon, J. and Ba{\c{s}}ar, T. (2016b).
\newblock Robust mean field games for coupled {M}arkov jump linear systems.
\newblock {\em International Journal of Control}, 89(7):1367--1381.

\bibitem[\protect\astroncite{Nourian and Nair}{2013}]{NoNa13}
Nourian, M. and Nair, G. (2013).
\newblock Linear-quadratic-{G}aussian mean field games under high rate
  quantization.
\newblock In {\em CDC 2013}, Florence.

\bibitem[\protect\astroncite{Parthasarathy}{1967}]{Par67}
Parthasarathy, K. (1967).
\newblock {\em Probability Measures on Metric Spaces}.
\newblock AMS Bookstore.

\bibitem[\protect\astroncite{Saldi et~al.}{2018a}]{SaBaRaSIAM}
Saldi, N., Ba{\c{s}}ar, T., and Raginsky, M. (2018a).
\newblock {M}arkov--{N}ash equilibria in mean-field games with discounted cost.
\newblock {\em SIAM Journal on Control and Optimization}, 56(6):4256--4287.

\bibitem[\protect\astroncite{Saldi et~al.}{2018b}]{SaLiYuBook}
Saldi, N., Linder, T., and Y\"uksel, S. (2018b).
\newblock {\em Finite approximations in discrete-time stochastic control:
  Quantized models and asymptotic optimality}.
\newblock Springer, Cham.

\bibitem[\protect\astroncite{Serfozo}{1982}]{Ser82}
Serfozo, R. (1982).
\newblock Convergence of {L}ebesgue integrals with varying measures.
\newblock {\em Sankhya Ser.A}, pages 380--402.

\bibitem[\protect\astroncite{Tembine et~al.}{2014}]{TeZhBa14}
Tembine, H., Zhu, Q., and Ba\c{s}ar, T. (2014).
\newblock Risk-sensitive mean field games.
\newblock {\em IEEE Transactions on Automatic Control}, 59(4):835--850.

\bibitem[\protect\astroncite{Vega-Amaya}{2003}]{Veg03}
Vega-Amaya, O. (2003).
\newblock The average cost optimality equation: a fixed point approach.
\newblock {\em Boletín de la Sociedad Matemática Mexicana}, 9(3):185--195.

\bibitem[\protect\astroncite{Wiecek}{2019}]{Wie19}
Wiecek, P. (2019).
\newblock Discrete-time ergodic mean-field games with average reward on compact
  spaces.
\newblock {\em Dynamic Games and Applications}, pages 1--35.


\bibitem[\protect\astroncite{Wiecek and Altman}{2015}]{WiAl05}
Wiecek, P. and Altman, E. (2015).
\newblock Stationary anonymous sequential games with undiscounted rewards.
\newblock {\em Journal of Optimization Theory and Applications}, 166(2):686-710.


\end{thebibliography}

\end{document}